\definecolor{Gray}{gray}{0.85}
\newtheorem{theorem}{Theorem}
\newtheorem{conjecture}[theorem]{Conjecture}
\newtheorem{corollary}[theorem]{Corollary}
\newtheorem{lemma}[theorem]{Lemma}
\newtheorem{proposition}[theorem]{Proposition}
\newtheorem*{rep@theorem}{\rep@title}
\newcommand{\newreptheorem}[2]{%
\newenvironment{rep#1}[1]{%
 \def\rep@title{#2 \ref{##1}}%
 \begin{rep@theorem}}%
 {\end{rep@theorem}}}
\theoremstyle{definition}
\newtheorem{definition}[theorem]{Definition}
\newtheorem{example}[theorem]{Example}
\newtheorem{remark}[theorem]{Remark}
\newcommand{\Z}{\mathbb{Z}}
\newcommand{\Q}{\mathbb{Q}}
\newcommand{\fix}{\operatorname{Fix}}
\newcommand{\hf}{\widehat{HF}}
\newcommand{\rkh}{\widebar{Kh}}
\newcommand{\ord}{\operatorname{ord}}
\newcommand{\isom}{\cong}
\newcommand{\gam}{\tilde{\gamma}}
\newcommand{\tGam}{\widetilde{\Gamma}}
\newcommand{\Gam}{\Gamma}
\newcommand{\bdc}{\Sigma(K)}
\newcommand{\os}{Ozsv\'ath and Szab\'o }
\title{Cosmetic surgery in L-spaces and nugatory crossings}
\date{}
\author{Tye Lidman and Allison H.\ Moore }
\begin{document}

\maketitle

\begin{abstract}
The cosmetic crossing conjecture (also known as the ``nugatory crossing conjecture") asserts that the only crossing changes that preserve the oriented isotopy class of a knot in the 3-sphere are nugatory.  We use the Dehn surgery characterization of the unknot to prove this conjecture for knots in integer homology spheres whose branched double covers are L-spaces satisfying a homological condition. This includes as a special case all alternating and quasi-alternating knots with square-free determinant. As an application, we prove the cosmetic crossing conjecture holds for all knots with at most nine crossings and provide new examples of knots, including pretzel knots, non-arborescent knots and symmetric unions for which the conjecture holds.  
\end{abstract}

%\begin{classification}
%57M25; 57M27
%\end{classification}

%\begin{keywords}
%nugatory crossing, Heegaard Floer homology, Dehn surgery, cosmetic surgery, branched double cover, L-space
%\end{keywords}

\section{Introduction}
Let $K$ be an oriented knot in $S^3$, and let $c$ refer to an oriented crossing in a diagram of the knot. A fundamental question is whether a crossing change at $c$ preserves the isotopy type of the knot. Let a crossing disk $D$ be an embedded disk in $S^3$ intersecting $K$ twice with zero algebraic intersection number. If $\partial D$ also bounds an embedded disk in the complement of $K$, then the crossing is called \emph{nugatory} and changing this crossing does not change the isotopy type of $K$.  A non-nugatory crossing change which preserves the oriented isotopy type of the knot is called \emph{cosmetic} and it is conjectured that no such crossing exists for a knot in $S^3$. 

\begin{conjecture}[Cosmetic crossing conjecture]\label{conj:ccc}
If $K$ admits a crossing change at a crossing $c$ which preserves the oriented isotopy class of the knot, then $c$ is nugatory.
\end{conjecture}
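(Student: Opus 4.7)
The plan is to translate the cosmetic crossing problem into a Dehn surgery problem on the double branched cover and argue that only nugatory configurations survive. Let $D$ be a crossing disk for $c$, and let $\gam \subset \bdc$ denote the lift of $\bdy D$. By the Montesinos trick, the crossing change at $c$ corresponds to a Dehn surgery on $\gam$ with a fixed slope. If the crossing change is cosmetic, then the resulting knot is isotopic to $K$, so the branched double cover of the modified knot is orientation-preservingly homeomorphic to $\bdc$. Hence $\gam$ admits a non-trivial Dehn surgery yielding the ambient manifold itself, i.e.\ a purely cosmetic Dehn surgery in the sense of the cosmetic surgery conjecture.

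The next step would be to upgrade this cosmetic surgery conclusion to the statement that $\gam$ bounds an equivariant disk in $\bdc$ that descends to a compressing disk for $\bdy D$ in $S^3\setminus K$. Concretely, one wants a Dehn surgery characterization of the unknot for knots in branched double covers, strong enough that any $\gam$ admitting such a cosmetic surgery must be unknotted in $\bdc$ and, after equivariant refinement, witnesses nugatoriness downstairs. When $\bdc$ is an L-space with an appropriate homological condition, the \os surgery characterization of the unknot together with $d$-invariant arithmetic delivers exactly this rigidity, which is the mechanism exploited by the present paper.

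The main obstacle is that no analogous unknot characterization via Dehn surgery is currently known for knots in arbitrary closed 3-manifolds. Away from the L-space setting, $d$-invariants lose their sharpness, and even the cosmetic surgery conjecture itself is open in this generality. A genuine proof of Conjecture~\ref{conj:ccc} in full would therefore require either a substantial generalization of Floer-theoretic surgery obstructions to non-L-space ambient manifolds---perhaps through involutive Heegaard Floer homology, bordered invariants, or a finer analysis of $CFK^\infty$ of $\gam \subset \bdc$---or a fundamentally different input such as taut foliations, sutured manifold theory, or equivariant gauge-theoretic techniques. One would also need to track the action of the deck involution, since the cosmetic surgery conclusion alone need not promote to an \emph{equivariant} disk downstairs.

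In the interim, I would combine the branched-cover strategy above with the classical obstructions known to constrain cosmetic crossings: invariance of the Alexander polynomial, signature, Seifert genus (via work of Kalfagianni and collaborators), and fiberedness. Together with Floer-theoretic rigidity inherited from the double cover, these restrictions can extend the class of knots for which the conjecture is provable well beyond the L-space case treated here. Nevertheless, a complete resolution of Conjecture~\ref{conj:ccc} for arbitrary knots in $S^3$ appears to require essentially new ideas, and I do not expect the branched-cover approach alone to settle it without genuine advances beyond the methods of this paper.
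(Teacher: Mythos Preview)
The statement you are addressing is a \emph{conjecture}, not a theorem: the paper does not prove Conjecture~\ref{conj:ccc} in general, only the special case recorded in Theorem~\ref{main}. Your write-up correctly recognizes this and is, by your own admission, not a proof but a strategic outline together with an explanation of why the method stalls outside the L-space setting. In that sense there is no ``paper's proof'' of Conjecture~\ref{conj:ccc} to compare against, and your honest assessment that the full conjecture requires genuinely new input is consistent with the state of the art.

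Your outline of the partial approach does match the paper's method in spirit: pass to the branched double cover, interpret the crossing change as a pair of Dehn fillings at distance two via the Montesinos trick, invoke a surgery characterization of the unknot when the ambient manifold is an L-space, and then use an equivariant argument to push the disk back down to $S^3$. One technical correction is worth flagging. In the paper's setup, $\gam$ is the lift of the crossing \emph{arc} $\gamma$, not of the crossing circle $\bdy D$. The crossing circle has linking number zero with $K$, hence lifts to two disjoint curves in $\bdc$; it is the arc, with endpoints on $K$, that lifts to a single knot whose exterior is the branched double cover of the tangle complement. This distinction matters both for correctly identifying the filling slopes $\alpha$ and $\beta$ and for the equivariant Dehn's lemma step (Proposition~\ref{nugatory}) that converts unknottedness of $\gam$ into nugatoriness of $c$. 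Finally, note that the paper needs more than just the L-space hypothesis: the square-free condition on $H_1(\bdc)$ is used in Theorem~\ref{squarefreefactors} to force $\gam$ to be null-homologous, without which the surgery characterization of the unknot (Theorem~\ref{unknot}) cannot be applied.
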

Remarkably, this basic question is unanswered for most classes of knots. The conjecture is attributed to X. S. Lin (see \cite[Problem 1.58]{Kirby:Problems}), and in the literature it sometimes appears as the ``nugatory crossing conjecture''. Scharlemann and Thompson proved that the unknot admits no cosmetic crossing changes \cite[Theorem 1.4]{ST}, and Torisu and Kalfagianni established the same for 2-bridge knots and fibered knots, respectively \cite{Torisu, Kalfagianni}. There exist several additional obstructions amongst genus one knots and satellites \cite{BFKP, BK}.

In this note, we prove Conjecture~\ref{conj:ccc} for knots whose branched double covers are L-spaces that satisfy a certain homological condition. Recall that an L-space $Y$ is a rational homology sphere with $\operatorname{rank} \widehat{HF}(Y) = |H_1(Y;\mathbb{Z})|$, where $\widehat{HF}$ denotes the hat flavor of Heegaard Floer homology. 

\begin{theorem}
\label{main}
Let $K$ be a knot in $S^3$ whose branched double cover $\Sigma(K)$ is an L-space. If each summand of the first homology of $\Sigma(K)$ has square-free order, then $K$ satisfies the cosmetic crossing conjecture.
\end{theorem}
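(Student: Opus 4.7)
The plan is to lift the crossing change to a Dehn surgery in the branched double cover $\bdc$ and then apply an L-space version of the surgery characterization of the unknot. Suppose $K$ admits a crossing change at $c$ that preserves the oriented isotopy class of $K$. Let $D$ be the crossing disk, and set $L = \bdy D$. Because $L$ has linking number zero with $K$, it lifts to a two-component link $\tilde L = \tilde L_1 \cup \tilde L_2 \subset \bdc$, and the crossing disk lifts to an annulus cobounded by $\tilde L_1$ and $\tilde L_2$. The crossing change, realized in $S^3$ by $\pm 1$ surgery on $L$ with the framing from $D$, lifts to a Dehn surgery on $\tilde L$ in $\bdc$; equivalently, passing to the core of the solid torus that double-covers the crossing ball gives a single knot $\tilde c \subset \bdc$ on which the crossing change corresponds to a half-integer surgery.

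Because the crossing change preserves the oriented isotopy class of $K$, the branched double covers of the original and the modified knot coincide, so the lifted surgery returns $\bdc$. The core of the argument is then the following cosmetic surgery statement: if $\tilde c$ is a knot in an L-space $Y$ admitting a surgery $Y_r(\tilde c) \cong Y$ at the relevant slope, and each cyclic summand of $H_1(Y;\Z)$ has square-free order, then $\tilde c$ bounds an embedded disk in $Y$. I would deduce this by adapting the \os surgery characterization of the unknot through the mapping cone formula for knot Floer homology: the L-space hypothesis reduces the relevant Heegaard Floer content to Ozsv\'ath--Szab\'o $d$-invariants, and the square-free summand hypothesis should be precisely what is needed for those $d$-invariants to distinguish spin-$c$ structures finely enough to force the knot Floer complex of $\tilde c$ to agree with that of the unknot.

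Once $\tilde c$ (equivalently, each $\tilde L_i$) is known to bound an embedded disk in $\bdc$, an equivariant step is needed to descend to $S^3$. Using the symmetry of $\tilde L$ under the covering involution $\tau$ together with an equivariant version of Dehn's lemma, one can arrange a $\tau$-invariant disk system. The image in the quotient $S^3$ is an embedded disk bounded by $L$ and disjoint from $K$, so $c$ is nugatory, contradicting the assumption.

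The principal obstacle is the middle paragraph. The lifting of crossing changes to the branched double cover is classical (Montesinos), and the equivariant descent of a disk is by now fairly routine; what requires the real work is the Heegaard Floer obstruction pinning down the knot Floer complex of $\tilde c \subset \bdc$, and in particular verifying that the square-free summand hypothesis on $H_1(\bdc)$ is precisely what makes the $d$-invariant / mapping cone argument close up and exclude every nontrivial knot.
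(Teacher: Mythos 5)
Your overall architecture (lift the crossing change to a surgery on a knot $\tilde c$ in $\Sigma(K)$, show $\tilde c$ is unknotted via a surgery characterization of the unknot, then descend with equivariant Dehn's lemma) matches the paper's, but the middle step as you propose it is a genuine gap, and you have misplaced where the square-free hypothesis does its work. The surgery characterization of the unknot in an L-space that actually exists in the literature (Gainullin, extending Kronheimer--Mr\'owka--Ozsv\'ath--Szab\'o) applies only to \emph{null-homologous} knots: it compares $Y_{p/q}(\tilde c)$ with $Y_{p/q}(U)$, a comparison that does not even make sense for a homologically essential $\tilde c$. Your proposed ``cosmetic surgery statement'' for a knot $\tilde c$ that may carry nontrivial homology, to be extracted from the mapping cone formula and $d$-invariants with the square-free condition entering Floer-theoretically, is not a known theorem, and you do not prove it; you explicitly defer it as ``the principal obstacle.'' That obstacle is essentially the entire content of the theorem.

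The paper closes this gap with a purely homological argument, not a Floer-theoretic one. Setting $M = \Sigma(K) - N(\tilde\gamma)$, the two filling slopes $\alpha,\beta$ returning $\Sigma(K^+)$ and $\Sigma(K^-)$ satisfy $\Delta(\alpha,\beta)=2$ by the Montesinos trick and yield manifolds with equal $|H_1|$; Watson's formula $|H_1(M(\eta))| = c_M\,\Delta(\eta,\lambda_M)$ then forces $\Delta(\alpha,\lambda_M)=\Delta(\beta,\lambda_M)=1$ (the value $2$ being excluded because knot determinants are odd). A Smith-normal-form computation with the presentation matrices of $H_1(M(\alpha))$ and $H_1(M(\beta))$ shows that if every invariant factor of $H_1(\Sigma(K))$ is square-free, then $i_*(\lambda_M)$ is trivial, i.e.\ the lift $\tilde\gamma$ of the crossing arc is \emph{integrally null-homologous}. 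Only at that point does Floer theory enter, and then only as a black box via Gainullin's theorem. So the square-free hypothesis is consumed entirely at the level of $H_1$, before any $d$-invariant appears. To repair your outline, replace the middle paragraph with this homological reduction to the null-homologous case. A smaller caution: the equivariant descent is also not quite ``routine'' --- one must handle the case of a $\tau$-invariant compressing disk meeting the fixed set in a single point, and one must verify that the boundary of the descended disk is the crossing circle rather than the other slope at distance one from both $\alpha$ and $\beta$ --- but these are manageable compared to the main gap.
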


Theorem~\ref{main} will be deduced from the Dehn surgery characterization of the unknot in L-spaces \cite{Gainullin, KMOS} (see Theorem \ref{unknot}). It is interesting to juxtapose Theorem \ref{main} with \cite[Theorem 1.1]{BFKP}, which implies that if a genus one knot $K$ admits a cosmetic crossing change, then $H_1(\Sigma(K))$ is cyclic of order $d^2$, for some $d\in\Z$.
 
An abundant source of knots that meet the conditions of Theorem~\ref{main} are the \emph{Khovanov thin} knots. These knots derive their definition from reduced Khovanov homology, which associates to an oriented link $L$ in $S^3$ a bigraded vector space $\rkh^{i,j}(L)$ over $\Z/2\Z$ \cite{Khovanov}. The $\rkh$--thin links are those with their homology supported in a single diagonal $\delta=j-i$ of the bigradings.  In this case, the dimension of $\rkh$ is given by the determinant of the link. By work of Manolescu and Ozsv\'ath \cite{ManOzs}, all quasi-alternating links are $\rkh$--thin, and this class includes all non-split alternating links \cite{OzSz:BDCs}. Of relevance here is the fact that the branched double cover of a $\rkh$--thin link is an L-space, which follows from the spectral sequence from $\rkh(L)$ to $\hf(-\Sigma(L))$ \cite{OzSz:BDCs} and the symmetry of Heegaard Floer homology under orientation reversal \cite{OzSz:Properties}.
 
Because the determinant of a knot is equal to the order of the first homology of its branched double cover we immediately obtain the following corollary.
\begin{corollary}
\label{squarefreedet}
A $\rkh$--thin knot with square-free determinant satisfies the cosmetic crossing conjecture.
\end{corollary}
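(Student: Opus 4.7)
The plan is to derive Corollary~\ref{squarefreedet} immediately from Theorem~\ref{main} by verifying both of its hypotheses for a $\rkh$--thin knot $K$ with square-free determinant: namely, that $\bdc$ is an L-space, and that each summand of $H_1(\bdc)$ has square-free order.

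For the L-space hypothesis, I would invoke the argument recalled in the paragraph preceding the corollary. The \os spectral sequence has $E_2$ page $\rkh(K)$ and converges to $\hf(-\bdc)$, giving the inequality $\mathrm{rank}\,\hf(-\bdc) \leq \dim \rkh(K)$. For $\rkh$--thin knots one has $\dim \rkh(K) = \det(K) = |H_1(\bdc)|$, and the orientation-reversal symmetry $\mathrm{rank}\,\hf(-\bdc) = \mathrm{rank}\,\hf(\bdc)$ combined with the general inequality $\mathrm{rank}\,\hf(Y) \geq |H_1(Y)|$ for a rational homology sphere forces equality. Thus $\bdc$ is an L-space.

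For the homological hypothesis, I would argue that square-free determinant in fact forces $H_1(\bdc)$ to be cyclic. Writing the invariant factor decomposition $H_1(\bdc) \cong \Z/d_1 \oplus \cdots \oplus \Z/d_k$ with $d_1 \mid d_2 \mid \cdots \mid d_k$, if $k \geq 2$ and $d_1 > 1$ then $d_1^2$ divides $d_1 d_2$, and hence divides $|H_1(\bdc)| = \det(K)$, contradicting the square-free assumption. So there is at most one nontrivial invariant factor, and in either the invariant factor or the prime-power decomposition every summand has square-free order.

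Both hypotheses of Theorem~\ref{main} are then satisfied, and the corollary follows. I do not anticipate any genuine obstacle: the L-space input and the arithmetic of invariant factors are each routine, and the corollary is essentially a bookkeeping step that confirms ``square-free determinant'' is the correct hypothesis on a $\rkh$--thin knot to feed into Theorem~\ref{main}.
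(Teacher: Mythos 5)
Your proposal is correct and follows the same route as the paper, which deduces the corollary immediately from Theorem~\ref{main} using the spectral sequence argument for the L-space condition and the fact that $\det(K) = |H_1(\bdc)|$, so a square-free determinant forces every summand of $H_1(\bdc)$ to have square-free order. Your extra observation that the group must in fact be cyclic is true but not needed; the simpler point that each summand's order divides the square-free total order already suffices.
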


We apply Theorem~\ref{main} (and in particular, Corollary \ref{squarefreedet}) together with previously known obstructions for two-bridge, fibered and genus one knots to affirm the cosmetic crossing conjecture for all but ten knots in the Rolfsen tables \cite{Rolfsen} of knots of ten or fewer crossings. 
\begin{theorem}
\label{classification}
Let $K$ be a knot of at most ten crossings not contained in the list
\begin{equation}\label{exceptions}
	10_{65}, 10_{66}, 10_{67}, 10_{77}, 10_{87}, 10_{98}, 10_{108}, 10_{129}, 10_{147}, 10_{164}.
\end{equation}
Then $K$ admits no cosmetic crossing changes. In particular, all knots of at most nine crossings satisfy the cosmetic crossing conjecture. 
\end{theorem}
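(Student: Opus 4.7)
The plan is to enumerate every knot of ten or fewer crossings in the Rolfsen table and verify that each one, outside the list \eqref{exceptions}, satisfies the hypotheses of one of the following obstructions to cosmetic crossing changes: Corollary~\ref{squarefreedet}, which handles $\rkh$--thin knots of square-free determinant and thus, by \cite{ManOzs, OzSz:BDCs}, every quasi-alternating knot of square-free determinant; Torisu's theorem for two-bridge knots \cite{Torisu}; Kalfagianni's theorem for fibered knots \cite{Kalfagianni}; and the genus-one obstruction from \cite{BFKP}, which forces $H_1(\Sigma(K))$ to be cyclic of square order whenever a genus-one knot admits a cosmetic crossing change.

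First I would tabulate the relevant invariants---determinant, alternating (or quasi-alternating) status, bridge index, Seifert genus, and fiberedness---for each knot under consideration, drawing on standard sources such as KnotInfo. Since every non-split alternating knot is quasi-alternating, hence $\rkh$--thin, Corollary~\ref{squarefreedet} immediately disposes of every alternating knot of square-free determinant. This step covers the overwhelming majority of prime knots of at most ten crossings and, in particular, nearly all knots with at most nine crossings.

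For the remaining alternating knots---those whose determinant is divisible by a nontrivial square---I would apply in turn Torisu's theorem if the knot is two-bridge, Kalfagianni's theorem if it is fibered, and the genus-one criterion of \cite{BFKP} in the remaining case, checking that $H_1(\Sigma(K))$ is not cyclic of square order. A direct inspection of the tables shows that every such knot of at most nine crossings lies in at least one of these three classes with the relevant hypothesis holding. For the non-alternating knots up to ten crossings, the same triage applies: I would first check whether the knot is quasi-alternating (equivalently, $\rkh$--thin in this range) with square-free determinant, and only then fall back on the two-bridge, fibered, or genus-one obstructions.

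The principal obstacle is the bookkeeping of this case analysis. The ten exceptions in \eqref{exceptions} should emerge as precisely those knots to which none of the listed obstructions applies: each is non-fibered, of bridge number exceeding two, either not $\rkh$--thin or of non-square-free determinant, and either not of Seifert genus one or else of genus one with $H_1(\Sigma(K))$ cyclic of square order. The subtle points are verifying fiberedness (or its failure) for the borderline ten-crossing knots, and correctly computing $H_1(\Sigma(K))$ for the genus-one candidates, since a cyclic first homology of square order is the single obstruction that prevents \cite{BFKP} from applying. Once these invariants are verified across the ten-or-fewer-crossing table, the theorem follows, and in particular the nine-crossing case is complete because no exceptions arise there.
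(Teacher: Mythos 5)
Your overall triage strategy matches the paper's, but your toolkit is strictly weaker than the one the paper actually uses, and this creates a genuine gap. You invoke only Corollary~\ref{squarefreedet}, which requires the \emph{determinant} to be square-free, whereas the paper applies the full strength of Theorem~\ref{main}, which only requires each \emph{summand} of $H_1(\Sigma(K))$ to be square-free. The difference matters: the knots $9_{41}$, $9_{49}$, $10_{74}$, and $10_{103}$ have determinants $49$, $25$, $63$, and $75$ respectively (none square-free), are non-fibered, have bridge number three, and have genus at least two, so none of Corollary~\ref{squarefreedet}, Torisu's theorem, Kalfagianni's theorem, or the genus-one obstruction of \cite{BFKP} applies to them. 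Under your scheme they would be forced into the exceptional list, contradicting the statement you are trying to prove. They are rescued only because $H_1(\Sigma(9_{41}))\isom\Z/7\Z\oplus\Z/7\Z$, $H_1(\Sigma(9_{49}))\isom\Z/5\Z\oplus\Z/5\Z$, $H_1(\Sigma(10_{74}))\isom\Z/3\Z\oplus\Z/21\Z$, and $H_1(\Sigma(10_{103}))\isom\Z/5\Z\oplus\Z/15\Z$ all have square-free summands, so Theorem~\ref{main} applies even though the determinants do not qualify. You need to compute $H_1(\Sigma(K))$ for every thin knot with non-square-free determinant, not just for the genus-one candidates.

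A second, smaller gap concerns $10_{128}$: it is not known to be quasi-alternating or $\rkh$--thin, so your route to ``branched double cover is an L-space'' via thinness is unavailable, and it is non-fibered with bridge number three and genus greater than one, so your fallback obstructions also fail. The paper handles it separately by observing that it is the Montesinos knot $M(-2;4/7,1/2,2/3)$ and verifying directly, using the Seifert-fibered L-space criteria of Lisca--Mati\'c and Lisca--Stipsicz, that its branched double cover is an L-space, after which Theorem~\ref{main} applies (its determinant is $11$, and in particular $H_1$ has square-free summands). Also note your parenthetical ``quasi-alternating (equivalently, $\rkh$--thin in this range)'' is false even for nine crossings: $9_{46}$ is $\rkh$--thin but not quasi-alternating. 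That particular knot happens to be reachable by the genus-one obstruction since $H_1(\Sigma(9_{46}))\isom\Z/3\Z\oplus\Z/3\Z$ is not cyclic, but the claimed equivalence should not be asserted.
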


\begin{remark}
In fact, what we prove in Theorem \ref{main} holds in a more general setting. Indeed, for any knot in an integer homology sphere whose branched double cover is an L-space, if each summand of the first homology of the branched double cover is square-free then that knot does not admit cosmetic crossing changes. This statement requires an appropriate generalization of the definition of a cosmetic crossing change to an arbitrary homology sphere. (See Remark~\ref{generalization} and Theorem~\ref{generaltheorem}.)
\end{remark}

Another interesting class of knots are those which admit a positive Dehn surgery to an L-space; such a knot is known as an \emph{L-space knot}. Since these knots are known to be fibered by work of Ni \cite{Ni}, it follows from Kalfagianni \cite{Kalfagianni} that L-space knots satisfy the cosmetic crossing conjecture. To further illustrate the techniques of the proof of the main theorem, we provide an alternate proof of this fact.

\begin{theorem}
\label{lspaceknots}
Let $K$ be an L-space knot. Then $K$ satisfies the cosmetic crossing conjecture.
\end{theorem}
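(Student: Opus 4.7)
The plan is to mirror the strategy of Theorem~\ref{main}, but working with an L-space obtained by a large surgery on $K$ rather than with its branched double cover. Assume for contradiction that $K$ is an L-space knot admitting a cosmetic non-nugatory crossing change at a crossing $c$. Let $D$ be the crossing disk, $L:=\partial D$ the crossing circle, and $s$ the Dehn surgery slope on $L$ realizing the crossing change; then $L$ is unknotted in $S^3$, $\operatorname{lk}(K,L)=0$, and $s$-surgery on $L$ produces $S^3$ containing a knot $K'$ isotopic to $K$ as an oriented knot.

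Since $K$ is an L-space knot, we may choose an integer $r\geq 2g(K)-1$ so that $Y:=S^3_r(K)$ is an L-space. Let $L'\subset Y$ be the image of $L$ and let $K^*\subset Y$ denote the dual knot. Because Dehn surgeries on disjoint link components commute, the hypothesis $K'\isom K$ gives
\[
Y_{L'}(s)\;=\;S^3_r(K')\;=\;S^3_r(K)\;=\;Y.
\]
Likewise, the slope $s$ applied to an unknot bounding a disk in a ball in $Y$ yields $Y$ back. Theorem~\ref{unknot}, the Dehn surgery characterization of the unknot in L-spaces, then implies that $L'$ is isotopic to the unknot in $Y$; in particular, $L'$ bounds an embedded disk $\Delta\subset Y$.

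There is a canonical identification $Y\setminus\nu(K^*)\isom S^3\setminus\nu(K)$, under which any disk for $L'$ disjoint from $K^*$ descends to a disk for $L$ in the exterior of $K$, exhibiting $L$ as nugatory and contradicting the hypothesis. The remaining task, and the principal obstacle, is to arrange $\Delta\cap K^*=\emptyset$. After putting $\Delta$ in general position the algebraic intersection $\Delta\cdot K^*$ vanishes, since $[L']=0$ in $H_1(Y)\isom\Z/r$ while $[K^*]$ generates. To remove the geometric intersections we plan an inductive argument using innermost-disk and outermost-arc surgeries of $\Delta$ against the surgery torus $\partial\nu(K^*)$, together with the freedom to enlarge $r$, which varies the meridional slope of $K^*$ while keeping $Y$ an L-space. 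This is where the argument most diverges from Theorem~\ref{main}: without a deck involution there is no lifted annulus between preimages of $L$, and the nugatory conclusion must instead be read off directly from the surgery description.
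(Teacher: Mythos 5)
The first half of your argument coincides with the paper's: surgeries on the disjoint components $K$ and $L$ commute, the image $L'$ of the crossing circle in $Y=S^3_r(K)$ is null-homologous, the cosmetic hypothesis gives $Y_{s}(L')\cong Y\cong Y_{s}(U)$, and Theorem~\ref{unknot} forces $L'$ to be unknotted in $Y$. The gap is exactly where you flag it: passing from ``$L'$ bounds a disk $\Delta$ in $Y$'' to ``$L$ bounds a disk in the exterior of $K$ in $S^3$.'' Your plan --- innermost-disk and outermost-arc surgeries of $\Delta$ against $\partial\nu(K^*)$, starting from $\Delta\cdot K^*=0$ --- does not go through. Vanishing algebraic intersection number of a disk with a knot does not allow you to remove the geometric intersections: the pieces of $\Delta$ inside $\nu(K^*)$ can be meridian disks occurring with cancelling signs, and cut-and-paste along curves of $\Delta\cap\partial\nu(K^*)$ produces new surfaces bounded by $L'$ but gives no control forcing one of them to be a disk disjoint from $K^*$. (If one could always push an unknotted null-homologous curve off a dual knot this way, essentially every instance of the Dehn surgery characterization of the unknot would reduce to elementary general position, which it does not.)

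The paper's resolution is a different, global argument that you should adopt: run your first half for \emph{every} integer $p\ge 2g(K)-1$ at once. For each such $p$ the curve $\widetilde{C}_p$ is unknotted in the L-space $S^3_p(K)$, so its exterior $S^3_p(K)-N(\widetilde{C}_p)$ is reducible (it is a connected sum of a solid torus with $S^3_p(K)$). These exteriors are all Dehn fillings, along the $K$-boundary component, of the single link exterior $S^3-N(K\cup C)$. Since a compact, orientable, irreducible $3$-manifold with a torus boundary component admits only finitely many reducible fillings along that component, the link exterior itself must be reducible, hence $K\cup C$ is split; as $C$ is contained in a ball disjoint from $K$ and bounds a disk there, the crossing is nugatory. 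This sidesteps entirely the problem of isotoping a particular disk off the dual knot, which is the step your proposal leaves unresolved.
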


In Section \ref{sec:homology}, we establish necessary background information and prove a key homological result. In Section \ref{sec:mainproofs} we prove the main result Theorem \ref{main} and its generalization, as well as Theorem \ref{lspaceknots}. In Section \ref{sec:examples}, we prove Theorem \ref{classification} and we provide new examples of pretzel knots, non-arborescent knots and symmetric unions for which the cosmetic crossing conjecture holds.

%%%%%%%%%%%%%%%%%%%%%%%%%%%%%%%%%%%%%%%%%%%%
%%%%%%%%%%%%%%%%%%%%%%%%%%%%%%%%%%%%%%%%%%%%

\section{Homological obstructions}
\label{sec:homology}

In order to prove Theorem~\ref{main}, we study the effects of a cosmetic crossing change in terms of the homology of the branched double cover.  The current section is devoted to understanding this, culminating in Theorem~\ref{squarefreefactors}.  

\subsection{Cosmetic and nugatory crossings}
Let $K$ be an oriented knot in $S^3$, and let $c$ denote a crossing. We will abuse notation by allowing $K$ to denote both the oriented knot and its diagram.  We will write $K^+$ and $K^-$ for diagrams identical to $K$ except possibly at the crossing $c$, which is positive at $K^+$ and negative at $K^-$. Without loss of generality, we assume the crossing $c$ is positive. 
\begin{figure}
	\centering
	\includegraphics[width=8cm]{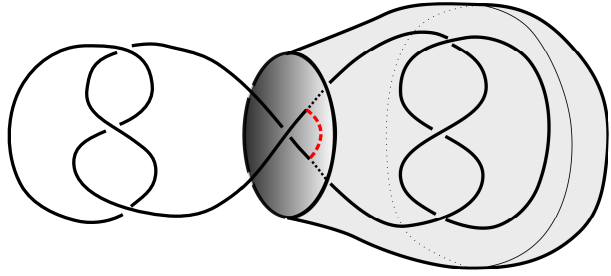}
	\caption{The crossing disk of a nugatory crossing.}
	\label{crossingdisk}
\end{figure} 
A \emph{crossing disk} for $K$ is an embedded disk $D$ that intersects $K$ transversely in its interior twice with zero algebraic intersection number, as in Figure \ref{crossingdisk}. The boundary of the crossing disk $\partial D$ is an unknot called the \emph{crossing circle}. %The knot $K^-$ may alternatively be obtained as the the image in $S^3$ of $K^+$ after performing $-1$--framed Dehn surgery on the crossing circle. 
If $\partial D$ bounds an embedded disk in the complement of $K$, then $c$ is called \emph{nugatory}. A \emph{crossing arc} $\gamma$ for the knot $K$ is an unknotted arc with its boundary on $K$ that may be isotoped to lie embedded in the crossing disk, again as in Figure \ref{crossingdisk}. We write $\Sigma(K)$ for the double cover of $S^3$ branched over the knot $K$. If $K$ admits a crossing change at $c$ which preserves the isotopy type of $K$, then the branched double covers $\Sigma(K^+)$ and $\Sigma(K^-)$ are orientation preserving homeomorphic, and the arc $\gamma$ lifts to a knot $\gam$ in $\Sigma(K^+)$. The complement of a neighborhood of $\gam$ in $\Sigma(K)$, is a compact, connected, oriented 3-manifold with torus boundary which we denote by $M$.  

There is a well-known correspondence between Dehn fillings of $M$ and rational tangle replacements of $K$ in $S^3$, commonly called ``the Montesinos trick." (See, for example, \cite{Gordon:Dehn} for a standard reference.)  We only state a special case here.  Take a small 3-ball $B$ containing the crossing $c$ in $K^+$, so that the sphere $\partial B$ intersects $K$ transversely in four points. Then the double cover of $S^3 - \operatorname{int}(B)$ branched over $T=K - \operatorname{int}(B\cap K)$ is the manifold $M = \Sigma(K^+)-N(\gam)$. Note that $\Sigma(K^-)$ is also obtained by a Dehn filling of $M$.  Finally, recall that the distance between any two slopes $\eta$ and $\xi$ on $\partial M$ refers to their minimal geometric intersection number and is denoted $\Delta(\eta, \xi)$.

\begin{lemma}[Montesinos trick] \label{montesinostrick} Let $\alpha$ and $\beta$ be the two slopes on $\partial M$ such that $M(\alpha) = \Sigma(K^+)$ and $M(\beta) = \Sigma(K^-)$.  Then $\Delta(\alpha, \beta) = 2$.
\begin{comment}
\label{distance2}
\begin{enumerate}
	\item Half-integral surgery along $M$ corresponds with a crossing change in $K$.
	\item The two slopes $\alpha$ and $\beta$ on $\partial M$ are realized as lifts of the boundaries of compressing disks for the rational tangles corresponding with the standard positive and negative tangles at $c$.
	\item $\Delta(\alpha, \beta) = 2$.
\end{enumerate}
\end{comment}
\end{lemma}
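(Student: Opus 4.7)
The proof will be a standard application of the Montesinos trick, building on the setup already established in the excerpt. We know that $\Sigma(K^\pm)$ is the Dehn filling of $M$ by the solid torus $V_\pm$, where $V_\pm$ is the branched double cover of the $3$-ball $B$ over the rational $2$-tangle $B \cap K^\pm$. The slopes $\alpha$ and $\beta$ on the common boundary torus $\partial M$ are precisely the meridional slopes of $V_+$ and $V_-$, so the claim reduces to computing the geometric intersection number of these two meridional slopes.

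The plan is to invoke the classical correspondence between rational $2$-tangles and Dehn filling slopes. The branched double cover of the $4$-punctured sphere $\partial B$ is the torus $\partial M$, and for a fixed meridian--longitude basis $(\mu, \lambda)$ there is a bijection sending a rational $2$-tangle of Conway fraction $p/q$ to the Dehn filling slope $p\mu + q\lambda$ on $\partial M$. Under standard conventions the positive-crossing tangle has Conway fraction $+1$ and the negative-crossing tangle has Conway fraction $-1$, so $\alpha$ and $\beta$ represent the primitive homology classes $\mu + \lambda$ and $-\mu + \lambda$ respectively. The geometric intersection number of these two primitive slopes on the torus is then $|1 \cdot 1 - 1 \cdot (-1)| = 2$.

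The main step requiring care is the rational-tangle/Dehn-surgery dictionary, specifically the assertion that two rational $2$-tangles of Conway fractions $p/q$ and $r/s$ produce Dehn fillings whose slopes have geometric intersection number $|ps - qr|$ on $\partial M$. This goes back to Conway's classification of rational tangles and is documented in detail in the survey of Gordon cited in the excerpt. Once this dictionary is in hand, the calculation above immediately yields $\Delta(\alpha, \beta) = 2$, completing the proof.
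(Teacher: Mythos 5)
The paper states this lemma without proof, citing Gordon's survey as a standard reference, and your argument is exactly the standard one that reference contains: identify $\alpha$ and $\beta$ as the meridians of the filling solid tori covering the $\pm 1$ rational tangles, and compute $\Delta(\alpha,\beta)=|ps-qr|=2$ from the Conway fractions $1/1$ and $-1/1$. This is correct and is essentially the same approach the paper intends (compare the commented-out outline in the source), so there is nothing to add beyond noting that the formula $|ps-qr|$ is independent of the choice of basis $(\mu,\lambda)$, which disposes of the only convention-dependent step.
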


\subsection{Rational longitude}
\label{rationallongitude}
Now let $M$ refer to any compact, connected, oriented 3-manifold with torus boundary and $H_1(M;\Q)\isom \Q$. For homology with $\mathbb{Z}$-coefficients, we will simply omit the coefficient group. In particular, the complement of the knot $\gam$ in the rational homology sphere $\bdc$ is such a manifold. In this section we recall some facts about the rational longitude of $M$ which we will use to study the homology of Dehn fillings of $M$.  For more detail on the rational longitude, we refer the reader to Section 3.1 of Watson \cite{Watson}.  We adopt the same notation as in \cite{Watson}, and the content of our own Section \ref{rationallongitude} is paraphrased from this, included because it is necessary for the arguments which follow in Section \ref{lift}.

Let us consider the long exact sequence of the pair $(M, \partial M)$,
\[
\cdots \rightarrow H_2(M) \rightarrow H_2(M,\partial M)\rightarrow H_1(\partial M)\stackrel{i_*}{\rightarrow} H_1(M) \rightarrow H_1(M, \partial M) \rightarrow \cdots.
\]
Via exactness and Poincar\'e-Lefschetz duality, it follows that the rank of $i_*$ is one. Therefore with $\Z$--coefficients, $i_*$ maps one of the $\Z$ summands of $H_1(\partial M)\isom \Z\oplus\Z$ injectively to $H_1(M)\isom \Z\oplus H$ (where $H$ is some finite abelian group). Additionally, $\ker(i_*)$ is generated by $k\lambda_M$ for some primitive homology class $\lambda_M\in H_1(\partial M)$ and some nonnegative integer $k$. The homology class $\lambda_M$ is uniquely defined up to sign, which determines a well-defined slope in $\partial M$, giving rise to the following definition.
\begin{definition}
The \emph{rational longitude} $\lambda_M$ is the unique slope in $\partial M$ such that $i_*(\lambda_M)$ is finite order in $H_1(M)$.
\end{definition}
We begin with the observation of \cite{Watson} that the order of the first homology of the manifold $M(\eta)$ obtained by Dehn filling $M$ along some slope $\eta$ is determined by $\Delta(\eta, \lambda_M)$.
\begin{lemma}\label{constant}
There is a constant $c_M > 0$ (depending only on $M$) such that for $\eta \neq \lambda_M$, 
\[
	|H_1(M(\eta))| = c_M\Delta(\eta,\lambda_M).
\]
\end{lemma}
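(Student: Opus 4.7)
The plan is to identify $H_1(M(\eta))$ as a quotient of $H_1(M)$ and then compute the order of that quotient in suitable coordinates. For the first step, I would apply the Mayer--Vietoris sequence to the decomposition $M(\eta) = M \cup_{\partial M} (S^1 \times D^2)$. Because the filling solid torus has $H_1 \cong \mathbb{Z}$, with the slope $\eta$ bounding a meridian disk and a dual slope mapping to a generator, the $\mathbb{Z}$-summand coming from the filling is absorbed into the image of $H_1(\partial M)$, and one obtains
\[
H_1(M(\eta)) \cong H_1(M)/\langle i_*(\eta)\rangle.
\]

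Next I would set up coordinates. Fix any slope $\mu$ with $\Delta(\mu, \lambda_M) = 1$, so that $\{\mu, \lambda_M\}$ is a basis of $H_1(\partial M)$. Since $H_1(M;\mathbb{Q}) \cong \mathbb{Q}$, we have $H_1(M) \cong \mathbb{Z} \oplus T$ with $T$ the torsion subgroup. By the definition of the rational longitude, $i_*(\lambda_M) = (0, t_1)$ for some $t_1 \in T$. Write $i_*(\mu) = (a, t_2)$; since $i_*$ has rank one and $\lambda_M$ generates the kernel after projecting to the free part $\mathbb{Z}$, we must have $a \neq 0$. Note that replacing $\mu$ by $\mu + k\lambda_M$ does not change $a$, so $a$ is an invariant of $M$ up to sign.

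Now any slope $\eta \neq \lambda_M$ can be written as $\eta = p\mu + q\lambda_M$ with $\gcd(p,q) = 1$ and $p \neq 0$, and the standard formula for intersection of simple closed curves on the torus gives $\Delta(\eta, \lambda_M) = |p|$. Then $i_*(\eta) = (pa, pt_2 + qt_1)$, and since $pa \neq 0$, the subgroup $\langle i_*(\eta)\rangle \subset \mathbb{Z} \oplus T$ is infinite cyclic and meets $T$ trivially. The short exact sequence $0 \to T \to \mathbb{Z} \oplus T \to \mathbb{Z} \to 0$ therefore descends to
\[
0 \to T \to H_1(M(\eta)) \to \mathbb{Z}/pa\mathbb{Z} \to 0,
\]
yielding $|H_1(M(\eta))| = |a|\cdot|T|\cdot|p|$. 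Setting $c_M := |a|\cdot|T|$ gives a positive constant depending only on $M$ with $|H_1(M(\eta))| = c_M \,\Delta(\eta, \lambda_M)$.

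The computation is essentially routine once the splitting $H_1(M) \cong \mathbb{Z} \oplus T$ and the basis $\{\mu, \lambda_M\}$ are chosen. The only point requiring any care is verifying that $a \neq 0$, which is where the hypothesis $H_1(M;\mathbb{Q}) \cong \mathbb{Q}$ (equivalently, that $i_*$ has rank exactly one) is used in conjunction with the defining property of $\lambda_M$.
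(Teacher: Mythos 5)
Your proof is correct and follows essentially the same route as the paper: both identify $H_1(M(\eta))$ with $H_1(M)/\langle i_*(\eta)\rangle$ and compute its order in the basis $(\mu,\lambda_M)$ adapted to the rational longitude, so that the factor $\Delta(\eta,\lambda_M)=|p|$ splits off from a constant depending only on $M$. The only cosmetic difference is that the paper (following Watson) reads off the order as the determinant of the presentation matrix \eqref{eq:presmatrix}, whereas you extract it from a short exact sequence $0 \to T \to H_1(M(\eta)) \to \Z/pa\Z \to 0$; the constants agree, with your $|a|\cdot|T|$ equal to the paper's $\ell\, r_1\cdots r_k$.
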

We recall the definition of the constant $c_M$ more explicitly. Fix a basis $(\mu, \lambda_M)$ for $H_1(\partial M)$, where $\Delta(\mu,\lambda_M)=1$. Then under the homomorphism  $i_*:H_1(\partial M)\rightarrow H_1(M)\isom \Z\oplus H$, we have $i_*(\mu)=(\ell, u)$ and $i_*(\lambda_M)=(0,h)$ for some $u, h\in H$ and $\ell\in\Z$. It turns out that $|\ell| = \ord_H i_*(\lambda_M)$. The constant $c_M$ is then described as
\begin{equation}\label{eq:watson-constant}
	c_M = \ell  r_1\cdots r_k = \ord_H i_*(\lambda_M)|H|,
\end{equation}
where $r_1, \dots, r_k$ are the invariant factors of the finite abelian group $H$. In fact, Watson gives an explicit presentation for $H_1(M(\eta))$. Writing the slope $\eta=a\mu+b\lambda_M$ on $\partial M$, we see $i_*(\eta)=(a\ell, au+bh)$. A presentation matrix for $H_1(M(\eta))$ is then
\begin{equation}\label{eq:presmatrix}
	\left( \begin{array}{ccc}
	a\ell & 0 \\
	au+bh & I_{\vec{r}}
	\end{array} \right),
\end{equation} 
where $I_{\vec{r}}$ is the $k \times k$ diagonal matrix with $i$th diagonal entry given by $r_i$. In particular, $|H_1(M(\eta))| = a \ell r_1 \cdots r_k$, and $c_M$ is taken to be $\ell r_1 \cdots r_k$. Finally, note that because $\Delta(\mu,\lambda_M) = 1$, we have $a=\Delta(\eta, \lambda_M)$, and this gives the statement of Lemma~\ref{constant}.  

In the following subsection, we will use \eqref{eq:presmatrix} to study the homology class of the lift of a crossing arc in the presence of a cosmetic crossing change.

\subsection{The lift of the crossing arc}
\label{lift}
We are now prepared to describe conditions which guarantee $\gam$ represents a null-homologous class in $H_1(\bdc)$. This will allow us to apply the Dehn surgery characterization of the unknot in the case that $\bdc$ is an L-space (see Theorem~\ref{unknot}).  Let us continue with the assumptions that $K$ admits a crossing change which preserves the isotopy type of $K$, and that $\alpha$ and $\beta$ are the two filling slopes for the orientation preserving homeomorphic manifolds $\Sigma(K^+)$ and $\Sigma(K^-)$, respectively.

\begin{theorem}
\label{squarefreefactors}
Suppose that $\Sigma(K)$ is the branched double cover of a knot $K$ admitting a crossing change preserving the isotopy type of $K$, and that 
\begin{equation}
\label{decomposition}
	H_1(\Sigma(K)) \isom  \Z/d_1\Z \oplus \dots \oplus \Z/d_n\Z
\end{equation}
where each $d_i$ is square-free. Then the lift $\gam$ of the crossing arc at $c$ is trivial in $H_1(\Sigma(K))$.
\end{theorem}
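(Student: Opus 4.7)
The plan is to combine Lemmas~\ref{montesinostrick} and~\ref{constant} to pin down the filling slopes on $\partial M$, and then to use the presentation matrix \eqref{eq:presmatrix} together with a prime-by-prime invariant-factor analysis to force $\gam$ to be null-homologous.

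First, I would apply Lemma~\ref{montesinostrick} to get filling slopes $\alpha,\beta$ on $\partial M$ with $\Delta(\alpha,\beta)=2$, $M(\alpha)=\Sigma(K^+)=\Sigma(K)$, and $M(\beta)=\Sigma(K^-)$. Since $M=\Sigma(K)\setminus N(\gam)$, the condition $M(\alpha)=\Sigma(K)$ identifies $\alpha$ with the meridian $\mu$ of $\gam$. In the basis $(\mu,\lambda_M)$ for $H_1(\partial M)$, write $\beta=a\mu+b\lambda_M$, so $|b|=\Delta(\alpha,\beta)=2$. The cosmetic hypothesis gives $|H_1(M(\alpha))|=|H_1(M(\beta))|$, and Lemma~\ref{constant} then forces $\Delta(\beta,\lambda_M)=\Delta(\alpha,\lambda_M)=1$, so $|a|=1$ and up to sign $\beta=\mu+2\lambda_M$.

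Next, I would reformulate the goal homologically. With $H_1(M)\isom\Z\oplus H$, $i_*(\mu)=(\ell,u)$, and $i_*(\lambda_M)=(0,h)$ as in Section~\ref{rationallongitude} (so $|\ell|=\ord_H h$), the knot $\gam$ is the core of the $\alpha$-filling, and its class in $H_1(\Sigma(K))=(\Z\oplus H)/\langle(\ell,u)\rangle$ is the image of $(0,h)$, which vanishes if and only if $h=0$. Suppose for contradiction that $h\ne 0$. The determinant of any knot in $S^3$ is odd, so $|H_1(\Sigma(K))|=c_M=\ell|H|$ is odd, and hence any prime $p$ dividing $\ord_H h$ is odd. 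Fix such a $p$, so that the $p$-primary component $h_p\in H_p$ is nonzero.

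Finally, I would compare the $p$-primary parts of $H_1(M(\alpha))$ and $H_1(M(\beta))$, which by \eqref{eq:presmatrix} are the $p$-primary parts of the cokernels of
\[
A=\begin{pmatrix}\ell & 0 \\ u & I_{\vec r}\end{pmatrix}\quad\text{and}\quad B=\begin{pmatrix}\ell & 0 \\ u+2h & I_{\vec r}\end{pmatrix}.
\]
Since $p\mid\ord_H h$, one has $v_p(\ell)\ge 1$. The square-free hypothesis on $H_1(\Sigma(K))=\operatorname{coker}(A)$ forces its $p$-primary part to be elementary abelian, and a direct Smith normal form calculation at $p$ then pins down $v_p(\ell)=1$, $H_p$ elementary abelian, and $u_p=0$. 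But then the $p$-localization of $B$ has relation $(\ell,2h_p)$ with $2h_p\ne 0$ in $H_p$ (since $p$ is odd and $h_p\ne 0$), and a short computation shows the image of the $\Z$-factor generator has order $p^2$ in $\operatorname{coker}(B)_p$. This contradicts the isomorphism $H_1(M(\alpha))\cong H_1(M(\beta))$ provided by the cosmetic hypothesis, so $h=0$ and $\gam$ is trivial in $H_1(\Sigma(K))$.

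The main obstacle is the last step: performing the Smith-form analysis precisely enough to see that the one-entry perturbation $u\mapsto u+2h$ in the presentation matrix is visible in the cokernel. The two essential inputs are the classical oddness of the knot determinant (so that $2$ is invertible modulo $p$ and $2h_p\ne 0$) and the square-free hypothesis (which rigidifies $A$ at $p$ enough that the perturbation by $2h_p$ cannot be absorbed by row or column operations).
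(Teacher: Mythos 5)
Your overall strategy is the paper's: Montesinos trick plus Watson's formula to pin down the filling slopes, then the presentation matrix \eqref{eq:presmatrix} and square-freeness to force $i_*(\lambda_M)=0$. The final step, done by localizing at each odd prime $p$ and showing that the perturbed relation $(\ell,u_p+2h_p)$ with $h_p\neq 0$ produces an element of order $p^2$ in $\operatorname{coker}(B)$, is correct and is essentially the paper's Smith-normal-form/gcd argument repackaged prime-by-prime; I checked the key computation (with $v_p(\ell)=1$ and $w\neq 0$ in the elementary abelian group $H_p$, the class of $(1,0)$ in $(\Z_{(p)}\oplus H_p)/\langle(\ell,w)\rangle$ has order exactly $p^2$), and your claims that $H_p$ is elementary abelian, $v_p(\ell)=1$, and $u_p=0$ all do follow from the square-free hypothesis. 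This localized phrasing is arguably cleaner than the paper's manipulation of $\gcd(\ell,r_i)$'s.

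There is, however, one genuine gap at the start: you assert that the meridian $\mu$ of $\gam$ together with $\lambda_M$ forms a basis of $H_1(\partial M)$, i.e.\ that $\Delta(\alpha,\lambda_M)=1$. This is not automatic --- for a knot in a rational homology sphere the meridian can meet the rational longitude with any multiplicity (e.g.\ the core of a Heegaard solid torus in a lens space), and if $\Delta(\mu,\lambda_M)>1$ then the expansion $\beta=a\mu+b\lambda_M$, the computation $\Delta(\alpha,\beta)=|b|$, the identity $|H_1(\bdc)|=c_M=\ell|H|$, and even the identification of $[\gam]$ with the image of $(0,h)$ (which uses that $\lambda_M$ is a longitude of the filling torus, hence homologous to the core) all break down. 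The paper proves this step: writing $\alpha=p\mu_0+q\lambda_M$ and $\beta=s\mu_0+r\lambda_M$ in an honest basis, Lemma~\ref{constant} gives $p=s$, the Montesinos trick gives $2=\Delta(\alpha,\beta)=|p(q-r)|$, so $p\in\{1,2\}$, and $p=2$ is excluded because it would make $|H_1(\bdc)|=2c_M$ even, contradicting the oddness of the knot determinant. You have all the needed ingredients on the table (you even invoke oddness of the determinant later, for a different purpose), so the gap is fixable, but as written this step is assumed rather than proved, and everything downstream depends on it.
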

\begin{proof}
Let $M = \Sigma(K) - N(\gam)$.  We consider the slopes $\alpha$ and $\beta$ on $M$ for which Dehn filling gives rise to the homeomorphic pair $M(\alpha)$ and $M(\beta)$, which are $\Sigma(K^+)$ and $\Sigma(K^-)$ respectively. Fix a basis $(\mu, \lambda_M)$ for $H_1(M)$, where $\lambda_M$ is the rational longitude and $\mu$ is any slope with $\Delta(\mu,\lambda_M)=1$. 
Write the two filling slopes $\alpha$ and $\beta$ in terms of this basis as
\begin{align*}
	\alpha &= p\mu + q\lambda_M \\
	\beta &= s\mu + r\lambda_M,
\end{align*}
with $p, s \geq 0$.  By Lemma \ref{constant}, 
\begin{eqnarray*}
	c_M\Delta(\alpha,\lambda_M)  = |H_1(M(\alpha))| = |H_1(M(\beta))| =  c_M\Delta(\beta,\lambda_M)
\end{eqnarray*}
which implies that $p=\Delta(\alpha,\lambda_M) = \Delta(\beta,\lambda_M) = s$. Thus, by the Montesinos trick, 
\[
	2 = \Delta(\alpha, \beta) = |p(q-r)|
\]
and so $p=1$ or $p=2$. However, if $p=2$, then $|H_1(\bdc)|$ is even, and this contradicts that knots have odd determinants. Therefore $p=1$, and we have by \eqref{eq:watson-constant} that 
\[
	|H_1(\bdc)| = c_M = |H|(\ord_H i_*(\lambda)).
\]
Now the distance $\Delta(\alpha, \lambda_M)$ is one, so after changing the basis $(\mu,\lambda_M)$ to $(\mu + q \lambda_M, \lambda_M)$, we may assume $\alpha = \mu$. Recall that we write $i_*(\mu) = (\ell,u) \in \mathbb{Z} \oplus H$.  Consider the presentation matrix 
\begin{equation}\label{eq:Hpresentation} 
	I_{\vec{r}} =\left( \begin{array}{cccc}
	r_1 \\
	& \ddots \\
	& & r_k
	\end{array} \right),
\end{equation}
for $H$, where $r_1, \ldots, r_k$ are the invariant factors of $H$.  From this, we may identify $u \in H$ (non-uniquely) with a vector $\vec{u} = (u_1,\ldots,u_k)$. From \eqref{eq:presmatrix}, a presentation matrix $A$ for $M(\alpha)$ is given by
\begin{equation}\label{eq:A-pres-matrix}
	A=\left( \begin{array}{ccccc}
	\ell &  \\
	u_1 & r_1 \\
	\vdots && \ddots \\
	u_k & & & r_k
	\end{array} \right),
\end{equation}
where $\ell = \pm \ord_H i_*(\lambda)$.  After possibly multiplying the first column by $-1$, we may assume $\ell \geq 1$.  

Now if $\ell=\ord_H i_*(\lambda_M)=1$, then $\lambda_M$ is integrally null-homologous and this implies that $\gam$ is in fact null-homologous in $\bdc$. To see this, consider the filling torus $N(\gam)$, and note that the rational longitude $\lambda_M$ is homologous to the core $\gam$ in $N(\gam)$, considered as a submanifold of $M(\alpha)$, since $\gam$ has intersection number one with the meridional disk bounded by $\alpha$ in $N(\gam)$. Since $\lambda_M$ is integrally null-homologous in the exterior $M$, then $\lambda_M$ also bounds in the filled manifold $M(\alpha)$. Hence $\gam$ is null-homologous in $M(\alpha)$.  Thus, it is our goal to show that $i_*(\lambda_M)$ is trivial.

As an aside, we note that in the special case each $d_i$ in (\ref{decomposition}) is a distinct prime (e.g. when $\det(K)$ is square-free) then it is immediate that $\ell=1$ because $i_*(\lambda_M)$ generates a subgroup of $H$. In general, we will argue that $\ell = 1$ by using the Smith normal form of $A$. 

Let $\Gamma_i$ denote the greatest common divisor of the determinants of the $i\times i$ minors of $A$.  Recall that the Smith normal form for an invertible $m \times m$ matrix $X$ is the diagonal matrix $I_{\vec{\delta}}$ where $\delta_i$ is $\Gamma_i/\Gamma_{i-1}$.  We have that $I_{\vec{\delta}}$ presents the same group as does $X$.  Finally, recall that $|\det(X)| = \delta_1 \cdots \delta_m$ is the order of the group being presented.   

First, we claim that each $u_i$ in \eqref{eq:A-pres-matrix} is a multiple of $\gcd(\ell, r_i)$. For if $u_i$ is not a multiple of $\gcd(\ell, r_i)$, then there exists some prime $p$ such that $p^j \divides \ell$ and $p^j \divides r_i$ but $p^j \not\divides u_i$ for some $j$. Now let $A_{1,i+1}$ be the $k \times k$ minor obtained by deleting the first row and $(i+1)$-th column of $A$; we have $|\det(A_{1,i+1})| = r_1 \cdots r_{i-1} u_i r_{i+1} \cdots r_k$. If $t$ is the largest power of $p$ which divides $\det(A)$, then the largest power of $p$ which divides $\det(A_{1,i+1})$ is at most $t-2$. Since $\delta_1 \cdots \delta_k = \Gamma_k$ divides $\det(A_{1,i+1})$, the largest power of $p$ which divides $\Gamma_k$ is at most $t - 2$.  Because $\det(A) = \delta_1 \cdots \delta_k \delta_{k+1}$ we see that $p^2$ divides $\delta_{k+1}$.  But this implies that some invariant factor in the decomposition of $H_1(\bdc)$ is not square-free, a contradiction. Thus, we may now write 
\[
	A=\left( \begin{array}{ccccc}
	\ell &  \\
	a_1\gcd(\ell, r_1) & r_1 \\
	\vdots && \ddots \\
	a_k\gcd(\ell, r_k) & & & r_k
	\end{array} \right),
\]
for some integers $a_1,\dots, a_k$.

Let us now consider the cosmetic filling. %In the basis $(\alpha, \lambda_M)$ for $H_1(\partial M)$, 
By Lemma~\ref{montesinostrick}, the curve $\beta$ may be written $\mu \pm 2\lambda_M$.  A presentation matrix $B$ for $M(\beta)$ is
\[ 
	B=\left( \begin{array}{ccccc}
	\ell &  \\
	u_1 \pm 2h_1 & r_1 \\
	\vdots && \ddots \\
	u_k \pm 2h_k & & &r_k
	\end{array} \right),
\] 
where $\vec{h}=(h_1,\dots, h_k)$ is identified with the image of $i_*(\lambda_M)$ in $H$ under \eqref{eq:Hpresentation}. The same argument as that for $A$ now applies to $B$ to show that $\gcd(\ell, r_i) \divides u_i \pm 2h_i$.  Because $\gcd(\ell, r_i)$ divides both $u_i$ and $|H_1(\Sigma(K))|$, and because $\gcd(\ell,r_i)$ is odd, we know $\gcd(\ell, r_i) \divides h_i$ and we write $h_i = b_i\gcd(\ell, r_i)$ for some integers $b_1, \dots, b_k$. Now, recall that $|\ell|=\ord_H i_*(\lambda_M)$. This means that $\ell \vec{h}$ is in the column span of $A_{1,1}$, so for each $i$, 
\[
	\ell h_i = \ell b_i \gcd(\ell, r_i) = c_i r_i
\]
for some integers $c_1, \dots, c_k$. But then 
\begin{eqnarray*}
	c_i\frac{r_i}{\gcd(\ell, r_i)} = \ell b_i &\Rightarrow& \frac{r_i}{\gcd(\ell, r_i)} \divides b_i \\
	&\Rightarrow&  b_i = g_i \frac{r_i}{\gcd(\ell, r_i)}, \text{ for some integer } g_i \\
	&\Rightarrow& h_i = g_i \frac{r_i}{\gcd(\ell, r_i)} \gcd(\ell, r_i).
\end{eqnarray*}
Because each $h_i$ is a multiple of $r_i$, then in fact $\vec{h}$ is in the column space of $A_{1,1}$.  However, $A_{1,1}$ is a presentation matrix for $H$, the torsion subgroup of $H_1(M)$, by \eqref{eq:Hpresentation}.  This says that $h$ represents the trivial element in $H$. Hence $i_*(\lambda_M)$ is trivial in $H_1(M)$, and we conclude that $\gam$ is null-homologous in $M(\alpha)$.
\end{proof}

%%%%%%%%%%%%%%%%%%%%%%%%%%%%%%%%%%%%%%%%%%%%
%%%%%%%%%%%%%%%%%%%%%%%%%%%%%%%%%%%%%%%%%%%%

\section{Proofs of main theorems}
\label{sec:mainproofs}

\subsection{Proof of Theorem \ref{main}}
Both Theorem \ref{main} and Theorem \ref{lspaceknots} depend on the Dehn surgery characterization of the unknot generalized to the case of null-homologous knots in L-spaces. The characterization of the unknot $U$ in $S^3$ is due to Kronheimer, Mr\'owka, Ozsv\'ath and Szab\'o \cite[Theorem 1.1]{KMOS}, and its generalization in L-spaces is due to Gainullin \cite[Theorem 8.2]{Gainullin}.  
\begin{theorem}
\label{unknot}
Let $K$ be a null-homologous knot in an $L$-space $Y$, and let $Y_{p/q}(K)$ denote the result of $p/q$-Dehn surgery surgery along $K$. If $Y_{p/q}(K)$ is orientation preserving homeomorphic to $Y_{p/q}(U)$, then $K$ is isotopic to $U$. 
\end{theorem}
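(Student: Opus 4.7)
The plan is to use the \os mapping cone formula, which expresses $\hf(Y_{p/q}(K))$ in terms of subquotients of the knot Floer complex $CFK^\infty(Y,K)$. Because $K$ is null-homologous in $Y$, this formula applies as in the case $Y = S^3$, with Spin$^c$ structures on $Y_{p/q}(K)$ parametrized by $\Z/p\Z \oplus \mathrm{Spin}^c(Y)$ in a standard way.

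First I would note that the unknot $U$ bounds an embedded disk in $Y$, so $Y_{p/q}(U) \isom Y \# L(p,q)$. By the connected sum formula for Heegaard Floer homology, this is an L-space, and its $\hf$ splits Spin$^c$-by-Spin$^c$ as a tensor product of $\hf(Y)$ with $\hf(L(p,q))$. The hypothesis $Y_{p/q}(K) \isom Y_{p/q}(U)$ then forces $Y_{p/q}(K)$ to be an L-space whose graded, Spin$^c$-refined Floer homology matches that of $Y \# L(p,q)$.

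Next I would apply the mapping cone for sufficiently large $|p/q|$, where $\hf(Y_{p/q}(K), s)$ reduces to the homology of the $A_s$-subcomplex of $CFK^\infty(Y,K)$ in the appropriate Alexander grading. The rank on each Spin$^c$ summand must equal the corresponding rank for $U$, and the $U$-computation simply returns $\hf(Y)$ supported in Alexander grading zero. A rank comparison across all $s$, together with the inequality $\dim H_*(A_s) \geq \dim \hf(Y)$ that comes from the spectral sequence collapse in the L-space case, collapses to an equality and forces each $A_s$-complex for $K$ to have the same homology as the corresponding $A_s$-complex for $U$. From this one extracts an isomorphism $\hfk(Y,K) \isom \hfk(Y,U)$ of bigraded groups; in particular $\hfk(Y,K)$ is supported entirely in Alexander grading zero.

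Finally, I would invoke the extension to null-homologous knots in L-spaces of the \os genus-detection theorem: the top Alexander grading in which $\hfk(Y,K)$ is nonzero equals the Seifert genus of $K$. Vanishing outside Alexander grading zero then forces $g(K) = 0$, so $K$ bounds an embedded disk in $Y$ and is isotopic to $U$. The main obstacle is the intermediate identification step: the mapping cone formula requires careful tracking of absolute Maslov gradings and of the Spin$^c$-to-$\Z$ correspondence produced by the surgery, and these data can differ between $K$ and $U$ by a global affine shift. Verifying that the homeomorphism $Y_{p/q}(K) \isom Y_{p/q}(U)$ is compatible with this bookkeeping---so that ranks match Spin$^c$-by-Spin$^c$ rather than merely in total---is the technical heart of the argument and the place where the L-space hypothesis on $Y$ is essential.
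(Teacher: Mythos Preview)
The paper does not supply its own proof of this statement; it is quoted as an external input, attributed to Kronheimer--Mrowka--Ozsv\'ath--Szab\'o \cite{KMOS} for $Y=S^3$ and to Gainullin \cite{Gainullin} for null-homologous knots in general L-spaces. There is therefore no in-paper argument to compare your attempt against.

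That said, your outline is in the spirit of Gainullin's proof---the mapping cone formula and a rank comparison of the $A_s$-complexes is exactly the engine used there---but as written it has a genuine gap. The hypothesis hands you a \emph{single fixed} slope $p/q$; you are not free to pass to ``sufficiently large $|p/q|$,'' which is where the large-surgery identification $\hf(Y_{p/q}(K),[s]) \cong H_*(\widehat{A}_s)$ holds on the nose. For an arbitrary slope one must work with the full rational mapping cone, in which infinitely many $\widehat{A}_s$'s are linked by the maps $v_s$ and $h_s$, and then argue (using the L-space hypothesis on $Y$ to control surjectivity of these maps) that the total rank of $\hf(Y_{p/q}(K))$ exceeds $p\cdot |H_1(Y)|$ by a nonnegative contribution from each $s$. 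Equality with the L-space bound $p\cdot |H_1(Y)|$ then forces every $\widehat{A}_s$ to have the minimal possible rank, and only at that point can one feed into genus detection to conclude $g(K)=0$. Your final paragraph correctly flags the Spin$^c$ bookkeeping as delicate, but the more serious omission is the unjustified jump to large surgeries; without that repaired the argument does not establish the theorem at the given slope.
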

With this, we are now prepared to prove the main theorem.

\begin{reptheorem}{main}
Let $K$ be a knot in $S^3$ with branched double cover $\Sigma(K)$ an L-space. If 
\[
	H_1(\Sigma(K)) \isom  \Z/d_1\Z \oplus \dots \oplus \Z/d_k\Z
\] 
with each $d_i$ square-free, then the cosmetic crossing conjecture holds for $K$.
\end{reptheorem}

\begin{proof}
Suppose that the branched double cover $\Sigma(K)$ is an L-space and $K$ admits a crossing change at a crossing $c$ which preserves the isotopy type of $K$. Without loss of generality, assume $c$ is positive.  We will show $c$ is nugatory. 

Let the knot $\gam$ in $\Sigma(K)$ be the lift of the crossing arc $\gamma$ at $c$. Theorem \ref{squarefreefactors} implies that in fact $\gam$ must be null-homologous in $\Sigma(K)$. Since $\gam$ is null-homologous and $|H_1(\Sigma(K^-))|$ is obviously equal to $|H_1(\Sigma(K))|$, we have that $\Sigma(K^-)$ is obtained by $\pm 1/n$-surgery on $\gam$. By assumption, $K^+$ is isotopic to $K^-$, thus $\Sigma(K^+)$ is orientation preserving homeomorphic to $\Sigma(K^-)$. By doing $\pm1/n$--framed Dehn surgery along an unknot in $\Sigma(K^+)$, we do not change the oriented homeomorphism type of $\Sigma(K^+)$, for any $n$. Thus applying Theorem \ref{unknot} we deduce that $\gam$ is isotopic to the unknot in $\bdc$. 

It remains to show that this implies the crossing $c$ is nugatory. This will follow as a special case of the equivariant Dehn's Lemma of Meeks and Yau \cite{MY}, which was instrumental in the proof of the Smith Conjecture. The case for an involution is due to Kim and Tollefson \cite{KT} and Gordon and Litherland \cite{GL}. The following is well-known to experts (see for instance \cite{Torisu}). However, we include the proof for completeness.  
\begin{proposition}
\label{nugatory}
If $\gam$ is a null-homologous unknot in $\Sigma(K)$, and the arc $\gamma\in S^3$ is the image of $\gam$ under the covering involution of $\Sigma(K)$, then the crossing associated with the arc $\gamma$ must be nugatory. 
\end{proposition}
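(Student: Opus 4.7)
The plan is to apply the equivariant Dehn's Lemma to the null-homologous unknot $\gam$ and then project the resulting invariant disk down to an embedded disk in $S^3$ that certifies that the crossing $c$ is nugatory.

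First I would set up the symmetry. The loop $\gam$ is setwise preserved by the covering involution $\tau$ of $\bdc$, and $\tau|_{\gam}$ is the involution of $S^1$ with exactly two fixed points, namely the preimages $p_1,p_2$ of the endpoints $\partial\gamma\subset K$, which lie on the branch set $\tilde{K}$. Since $\gam$ is by hypothesis an unknot, it bounds an embedded disk. I would then invoke the equivariant Dehn's Lemma in the involution case, due to Kim--Tollefson \cite{KT} and Gordon--Litherland \cite{GL} (building on Meeks--Yau \cite{MY}), to produce an embedded disk $\tilde D\subset\bdc$ with $\partial\tilde D=\gam$ that is either $\tau$-invariant, or else disjoint from $\tau(\tilde D)$ away from $\gam$.

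Next I would analyze $\tau|_{\tilde D}$. In the invariant case, since $\tau|_{\gam}$ is orientation-reversing on the boundary, $\tau|_{\tilde D}$ must be orientation-reversing on the whole disk; after arranging general position, its fixed set is an embedded arc $\alpha$ from $p_1$ to $p_2$ lying in $\tilde K$, with $\tilde D\cap\tilde K=\alpha$. The projection $\pi(\tilde D)=\tilde D/\tau$ is therefore an embedded disk in $S^3$ whose boundary is $\gamma\cup\kappa$, where $\kappa=\pi(\alpha)\subset K$ is a subarc of $K$ with $\partial\kappa=\partial\gamma$, and whose interior is disjoint from $K$. In the alternative case, $\tilde D\cup\tau(\tilde D)$ is an embedded 2-sphere in $\bdc$; after general position with $\tilde K$, its projection to $S^3$ meets $K$ only in $\partial\gamma$, and a parallel argument yields the same kind of downstairs disk.

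Finally, the existence of an embedded disk in $S^3$ with boundary $\gamma\cup\kappa$ and interior disjoint from $K$ says precisely that the crossing arc $\gamma$ is isotopic rel endpoints into $K$, through the complement of $K\setminus\kappa$. The standard characterization of nugatory crossings (see, e.g., Torisu \cite{Torisu}) then yields that the crossing circle $\partial D$ bounds an embedded disk in $S^3\setminus K$, so $c$ is nugatory. I expect the main obstacle to be the correct invocation of the equivariant Dehn's Lemma for involutions and the verification that the invariant disk can be put in general position with $\tilde K$; once the invariant disk is in hand, the descent to $S^3$ via fundamental domains is essentially formal.
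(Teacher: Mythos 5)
Your overall strategy (equivariant Dehn's Lemma plus descent to $S^3$) is the right one, but the final step contains a genuine gap. From an embedded disk $\Delta$ with $\partial\Delta=\gamma\cup\kappa$ and interior disjoint from $K$ you conclude that the crossing circle bounds a disk in $S^3\setminus K$. This is not the standard characterization of a nugatory crossing, and the implication fails in general: such a $\Delta$ only witnesses that $\gamma$ is parallel into $K$ as an \emph{unframed} arc, whereas the crossing circle is the boundary of a neighborhood of $\gamma$ inside the crossing disk $D$, so it encodes a framing of $\gamma$. If the framing induced by $\Delta$ differs from the one induced by $D$, the crossing circle is a curve that clasps or twists around $\kappa$; it is null-homologous in $S^3\setminus K$ but need not bound a disk there, and the crossing need not be nugatory (e.g.\ a clasp crossing between two parallel strands of $K$ has its crossing arc parallel into $K$, yet changing it generally changes the knot). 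Nothing in your construction relates the equivariant disk to the crossing disk $D$, so this framing cannot be controlled. The paper's proof is organized precisely around this point: it applies the equivariant Dehn's Lemma not to a disk bounded by $\gam$ but to a compressing disk $\tGam$ in the exterior $M=\bdc- N(\gam)$, whose boundary is the rational longitude $\lambda_M$, and then identifies the image of $\lambda_M$ downstairs. Using the Montesinos trick one has $\Delta(\alpha,\beta)=2$ and $\Delta(\alpha,\lambda_M)=\Delta(\beta,\lambda_M)=1$, so there are exactly two candidate slopes $w_1,w_2$ on the boundary of the crossing ball, and a homological argument singles out the crossing circle $w_2$ as the one whose lift is $\lambda_M$. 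That slope identification is the crux of the proposition and is absent from your argument.

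Two secondary issues. First, the equivariant Dehn's Lemma of Meeks--Yau, Kim--Tollefson, and Gordon--Litherland concerns properly embedded compressing disks in a manifold with boundary preserved by the group action; applying it directly to the circle $\gam$, which lies in the interior of the closed manifold $\bdc$ and passes through two points of $\fix(\tau)$, is not the standard statement (this is another reason the paper passes to the exterior of $\gam$ first, where the relevant curve $\lambda_M$ lies in $\partial M$). Second, in the invariant case you assert that the fixed set of $\tau|_{\tilde D}$ is an arc; Smith theory only gives an arc or a single point, and the single-point case must be ruled out separately --- the paper does so by showing such a disk would descend to one whose boundary also bounds in $\partial B$, contradicting that $\tGam$ is a compressing disk. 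Your ``disjoint'' case (two disks sharing the boundary $\gam$ forming a sphere meeting the branch set twice) is also only sketched; descending that sphere yields a summing sphere for $K$, which again requires an additional argument to locate the crossing ball relative to it before concluding nugatority.
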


\begin{proof}
Let $c$ be the crossing associated with $\gamma$ and let $M$ denote the exterior of $\gam$. By assumption $\gam$ is an unknot, therefore $M=D^2 \times S^1 \# \Sigma(K)$. We also will think of $M$ as $\Sigma(B, T)$, where $(B,T)$ is the tangle obtained by removing an open 3-ball neighborhood of the crossing arc at $c$. Finally, let $\tau$ denote the covering involution on $\Sigma(K)$.  

Let $\tGam  = D^2 \times \{pt\}$ in $M = D^2 \times S^1 \# \Sigma(K)$.  Of course, $\tGam$ is a compressing disk and $\partial \tGam$ is essential in $\partial M$.  Since $\partial \tGam$ is the unique slope on $\partial M$ which bounds in $M$, we see that $\partial \tGam$ is the rational longitude $\lambda_M$. By the equivariant Dehn's Lemma, we may assume that either $\tau(\tGam) \cap \tGam$ is empty or $\tau(\tGam) = \tGam$.  

First, suppose that $\tau(\tGam) \cap \tGam$ is empty.  This implies that $\tGam$ descends to a disk $\Gam$ in the exterior of $T \subset B$, and thus the exterior of $K$, since the lift of $K$ to $\Sigma(K)$ is precisely the fixed point set of $\tau$.  It remains to see that $\partial \Gam$ is a crossing circle for the crossing $c$, since this would imply that $c$ is nugatory.  We recall our previous notation $M(\alpha) = \Sigma(K^+)$ and $M(\beta) = \Sigma(K^-)$. By the same arguments as in the proof of Theorem \ref{squarefreefactors}, we have that 
$\Delta(\alpha, \beta)=2$ and $\Delta(\alpha, \lambda_M)=\Delta(\beta, \lambda_M)=1$. Taking again $(\alpha, \lambda_M)$ as a basis for $H_1(\partial M)$ we further have that $\beta = \alpha\pm 2\lambda_M$ in this basis. It is straightforward to verify that for any slope $\eta$ on $\partial M$, if $\Delta(\alpha, \eta) = \Delta(\beta,\eta) = 1$, then either $\eta = \lambda_M$ or $\eta = \pm \alpha + \lambda_M$ (where the sign is determined by $\beta = \alpha \pm 2 \lambda_M$).  In particular, there are precisely two slopes on $\partial M$ which have distance one from each of $\alpha$ and $\beta$.  

\begin{figure}
	\centering
	\includegraphics[width=5cm]{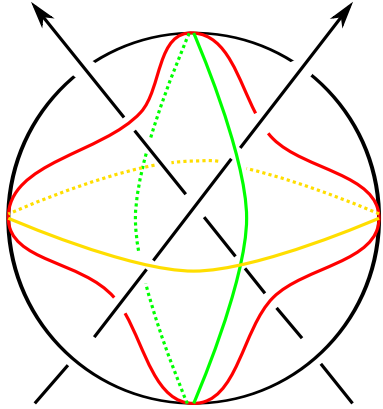}
	\caption{The two curves $w_1$ and $w_2$ in yellow and green, respectively, along with the quotient of $\alpha$ under the action of $\tau$ in red.}
	\label{tangleball}
\end{figure}
Consider the two curves $w_1$ and $w_2$ on the boundary of the neighborhood of the crossing $c$ shown in Figure~\ref{tangleball}. These curves lift to distinct slopes on $\partial M$ and have distance one from each of $\alpha$ and $\beta$. It follows that the lift of either $w_1$ or $w_2$ must be $\lambda_M = \partial \tGam$.  In other words, we must have that $\tGam$ descends to a disk in the exterior of $K$ with either $w_1$ or $w_2$ as its boundary. As the crossing circle (i.e. $w_2$ in Figure \ref{tangleball}) is the curve which is null-homologous in the complement of $K$, this implies that the crossing circle bounds a disk in the exterior of $K$ and thus $c$ is nugatory, as desired.         

Now, we consider the case that $\tau(\tGam) = \tGam$. Noting that the fixed point set of $\tau$ restricted to $\tGam$ is at most one-dimensional, Smith theory implies that $\tGam$ intersects $\fix(\tau)$ in either an arc or a single point. %Smith Theory details: Fix(\tau)=K is a 1-manifold, so $D^\tau = Fix(\tau)|_D$ is a 0 or 1-diml submanifold of the disk. Smith Theory says that (1) $\Sum \rank H_i(D^\tau) \leq \Sum \rank H_i(D) = 1$ which implies one connected component, and (2) $\Chi (D^\tau) = \Chi(D) \pmod 2$ which implies its not empty. So its a point or an arc.
In the case this intersection is an arc, standard arguments allow us to perturb $\tGam$ such that $\tau(\tGam) \cap \tGam$ is empty, which was handled in the previous case.  Thus, it remains to consider the case that $\tGam$ intersects the fixed point set of $\tau$ in a single point.  In this case, it follows that $\tGam$ descends to a disk $\Gam$ in $B$ which intersects $T = \fix(\tau|_M)$ in a single point. Note that $\partial \Gam$ also bounds a disk $\Gam'$ contained in $\partial B$ which intersects $\partial T$ in one point. But now $\Gam'$ lifts to a disk $\tGam'$ in $\Sigma(B,T) = M$ which is contained in $\partial M$ and has $\partial \tGam' = \partial \tGam$. This contradicts that $\tGam$ is a compressing disk, since $\partial \tGam$ bounds a disk in $\partial M$.  
\end{proof}

The above proposition completes the proof of Theorem~\ref{main}.  
\end{proof}

\begin{remark}
While we are not able to prove the cosmetic crossing conjecture in the case that $H_1(\Sigma(K))$ has summands which are not square-free, Theorem~\ref{main} can still be useful in the following sense.  If a knot $K$ has $\Sigma(K)$ an L-space, then if a particular crossing arc lifts to a null-homologous knot, then that crossing change must either be nugatory or change the isotopy class of the knot. 
\end{remark}

\begin{remark}
\label{generalization}
Although our main focus is on the cosmetic crossing conjecture for knots in the 3-sphere, the proof of Theorem \ref{main} also applies to knots in arbitrary integer homology spheres. For a knot $K$ in an integer homology sphere $Y$, we need only modify our definition of a crossing change; rather than change a crossing from positive to negative in a diagram of $K$, we define a {\em crossing change} as a $\pm1$--framed Dehn surgery along the crossing circle $C$, where $C$ is the boundary of a crossing disk $D$ as above. A \emph{cosmetic crossing change} for $K$ in $Y$ is analogously defined as for knots in $S^3$.

\begin{theorem}
\label{generaltheorem}
Let $K$ be a knot in an integer homology sphere $Y$ such that the double cover of $Y$ branched over $K$ is an L-space. If each integer $d_i$ in the decomposition
\[
	H_1(\Sigma(Y,K)) \isom \Z/d_1\Z \oplus\dots \oplus \Z/d_k \Z
\]
is square-free, then $K$ admits no cosmetic crossing changes. 
\end{theorem}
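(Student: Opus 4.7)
The plan is to show that the proof of Theorem~\ref{main} generalizes essentially verbatim, and to spend effort only on verifying that each ingredient — the Montesinos trick, the homological analysis of Section~\ref{sec:homology}, the Dehn surgery characterization of the unknot, and the equivariant Dehn's Lemma argument of Proposition~\ref{nugatory} — still applies when $S^3$ is replaced by an arbitrary integer homology sphere $Y$, using the definition of crossing change as a $\pm1$-framed Dehn surgery along the crossing circle given in Remark~\ref{generalization}.

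First, I would verify the Montesinos trick in this setting. The crossing disk $D$ and its boundary circle $C$ sit inside a small ball $B \subset Y$, and the local picture of the branched double cover near the crossing is identical to the $S^3$ case. In particular, removing an open neighborhood of the crossing arc yields a tangle whose double branched cover is the exterior $M$ of $\tilde{\gamma}$ in $\Sigma(Y,K)$, and the two crossing resolutions correspond to two slopes $\alpha, \beta$ on $\partial M$ with $\Delta(\alpha,\beta)=2$ and $M(\alpha)=\Sigma(Y,K^+)$, $M(\beta)=\Sigma(Y,K^-)$. Since the notion of crossing change in $Y$ is defined locally via surgery on the crossing circle, the distance-two statement is unaffected.

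Second, I would check the homological inputs. Since $Y$ is an integer homology sphere, the Alexander polynomial $\Delta_K(t)$ of $K \subset Y$ satisfies $\Delta_K(1)=\pm 1$, so $\Delta_K(-1)$ is congruent to $\Delta_K(1)$ modulo 2 and hence odd. Because $|H_1(\Sigma(Y,K))| = |\Delta_K(-1)|$, this order is odd, which is exactly the parity needed to rule out the $p=2$ case in the proof of Theorem~\ref{squarefreefactors}. Moreover, $\Sigma(Y,K)$ is a rational homology sphere, so $M$ satisfies $H_1(M;\Q)\cong \Q$ and the rational longitude discussion of Section~\ref{rationallongitude}, Lemma~\ref{constant}, and the presentation matrix computation \eqref{eq:presmatrix} apply unchanged. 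The proof of Theorem~\ref{squarefreefactors} then carries over verbatim under the square-free hypothesis to conclude that $\tilde{\gamma}$ is null-homologous in $\Sigma(Y,K)$.

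With null-homology of $\tilde{\gamma}$ in hand, the remainder of the proof is as in Theorem~\ref{main}: since $\Sigma(Y,K)$ is an L-space and $\Sigma(Y,K^-) \cong \Sigma(Y,K^+)$ is obtained from $\Sigma(Y,K)$ by $\pm 1/n$-surgery on the null-homologous knot $\tilde{\gamma}$, Gainullin's generalization Theorem~\ref{unknot} of the Dehn surgery characterization of the unknot forces $\tilde{\gamma}$ to be the unknot in $\Sigma(Y,K)$. Finally, Proposition~\ref{nugatory} applies directly because its proof only uses the equivariant Dehn's Lemma applied to the covering involution $\tau$ on the branched double cover, together with the local analysis of slopes on $\partial M$ that have distance one from both $\alpha$ and $\beta$ — neither of which requires the ambient manifold to be $S^3$. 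The main obstacle, such as it is, will be a careful bookkeeping check that ``nugatory'' translates correctly under the surgery definition of a crossing change in $Y$ (so that the final identification of $\partial\Gamma$ with the crossing circle, rather than the other candidate slope $w_1$, still picks out the correct null-homologous curve); everything else transfers from the $S^3$ proof without modification.
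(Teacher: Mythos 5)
Your proposal is correct and follows essentially the same route as the paper, which likewise proves Theorem~\ref{generaltheorem} by observing that the Montesinos trick, the rational longitude analysis, Theorem~\ref{squarefreefactors}, and Proposition~\ref{nugatory} all carry over to an integer homology sphere so that the proof of Theorem~\ref{main} applies mutatis mutandis. Your explicit check that $|H_1(\Sigma(Y,K))| = |\Delta_K(-1)|$ is odd (needed to rule out $p=2$ in Theorem~\ref{squarefreefactors}) is a worthwhile detail that the paper leaves implicit.
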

\begin{proof}
Note that Proposition \ref{nugatory} applies to any closed, oriented 3-manifold with a $\Z/2\Z$-action, and the Montesinos trick may be applied in any integer homology sphere. The content of Section \ref{rationallongitude} regarding the rational longitude is valid for any compact, oriented 3-manifold $M$ with torus boundary and $H_1(M;\Q)\isom \Q$, and Theorem \ref{squarefreefactors} will hold whenever $\Sigma(Y,K)$ is the double cover of an integer homology sphere $Y$ branched over a knot $K$, satisfying the same homology condition as (\ref{decomposition}). Thus the proof of Theorem \ref{main} applies mutatis mutandis.
\end{proof}

Boyer, Gordon and Watson conjectured that an irreducible rational homology 3-sphere is an L-space if and only if its fundamental group is not left-orderable \cite[Conjecture 1]{BGW}. An affirmative answer to this conjecture, together with a result of Boyer, Rolfsen and Wiest \cite[Theorem 3.7]{BRW}, suggest the following.
\begin{conjecture}
\label{lspace}
Let $Y$ be a rational homology 3-sphere. If $\Sigma(Y,K)$ is a prime L-space, then $Y$ is an L-space.
\end{conjecture}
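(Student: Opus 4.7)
The strategy is to establish the contrapositive, assuming the Boyer--Gordon--Watson conjecture throughout (which is consistent with the statement being a conjecture): if $Y$ is not an L-space, then $\Sigma(Y,K)$ is not a prime L-space.

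I would begin with a reduction to the case that $Y$ is irreducible. Suppose $Y = Y_1 \# Y_2$ with $Y_2 \neq S^3$ and, after isotopy, $K \subset Y_1$. Then a standard connected-sum computation gives $\Sigma(Y, K) \isom \Sigma(Y_1, K) \# Y_2 \# Y_2$, which has at least two nontrivial summands and so cannot be prime. The case in which $K$ meets an essential separating sphere nontrivially can be handled by an equivariant sphere argument, since any knot can be isotoped to meet such a sphere in at most two points. Thus primality of $\Sigma(Y,K)$ forces $Y$ to be irreducible.

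Assuming $Y$ is irreducible and not an L-space, BGW \cite{BGW} asserts that $\pi_1(Y)$ is left-orderable. Conversely, the prime L-space $\Sigma(Y,K)$ is an irreducible rational homology sphere, so a second application of BGW implies that $\pi_1(\Sigma(Y,K))$ is \emph{not} left-orderable. A contradiction is therefore produced by any mechanism that propagates left-orderability from $\pi_1(Y)$ to $\pi_1(\Sigma(Y,K))$. This is the role of \cite[Theorem 3.7]{BRW}, which gives a criterion under which left-orderability lifts through branched covering constructions of 3-manifolds. One would verify the hypotheses of that theorem in the setting of the double branched cover $\Sigma(Y,K) \to Y$ to complete the contradiction.

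\textbf{Main obstacle.} The delicate step is precisely this transfer of left-orderability. Unlike the situation for regular covers, where left-orderability passes automatically to subgroups, $\pi_1(\Sigma(Y,K))$ is not a subgroup of $\pi_1(Y)$; rather, it is the quotient of an index-two subgroup of $\pi_1(Y \setminus K)$ by the normal closure of meridians, and left-orderability is generally not preserved under such quotients. The work lies in verifying the homological and irreducibility hypotheses of BRW~3.7 in our setting (using, for example, that in a rational homology sphere every knot has a well-defined rational longitude and that $\Sigma(Y,K)$ is itself a rational homology sphere), and then invoking their construction to produce an invariant left-order on $\pi_1(\Sigma(Y,K))$. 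A further caveat is that the conclusion will only be conditional on the still-open BGW conjecture, which is why the statement is recorded here as a conjecture rather than a theorem.
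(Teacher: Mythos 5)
The statement you were given is recorded in the paper as Conjecture \ref{lspace}; the paper contains no proof of it. The authors only remark that an affirmative answer to the Boyer--Gordon--Watson conjecture \cite{BGW}, together with \cite[Theorem 3.7]{BRW}, would \emph{suggest} the statement. Your write-up is an elaboration of exactly that heuristic, and, as you yourself acknowledge, it is conditional on BGW, which is open. An argument that assumes an open conjecture cannot establish the statement; at best you have sketched the implication ``BGW implies Conjecture \ref{lspace},'' which is precisely the motivation the paper offers for stating it as a conjecture rather than a theorem. So the decisive gap is not a technical slip but the fact that the central input is unavailable.

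Within the conditional argument there are also two points to repair. First, \cite[Theorem 3.7]{BRW} is not a criterion for ``lifting left-orderability through branched covers''; it states that a compact, connected, $P^2$-irreducible $3$-manifold with nontrivial fundamental group has left-orderable $\pi_1$ if and only if $\pi_1$ admits a nontrivial homomorphism to a left-orderable group. The homomorphism you need is the surjection $p_*\colon \pi_1(\Sigma(Y,K)) \onto \pi_1(Y)$ induced by the branched covering projection: writing $G = \pi_1(Y \setminus K)$ and $H \leq G$ for the index-two subgroup determined by the cover, one has $HN = G$ for $N$ the normal closure of the meridian, since the meridian lies in $N$ but not in $H$; hence the induced map on the filled manifolds is onto. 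With $\pi_1(Y)$ nontrivial and left-orderable and $\Sigma(Y,K)$ irreducible, BRW then forces $\pi_1(\Sigma(Y,K))$ to be left-orderable, contradicting (conditionally) that $\Sigma(Y,K)$ is an L-space. Your concern about left-orderability failing to descend to quotients points the map in the wrong direction: nothing needs to descend, because $\pi_1(\Sigma(Y,K))$ is the \emph{source} of the relevant homomorphism. Second, the reduction to irreducible $Y$ is asserted rather than proved: a knot in a composite manifold need not be isotopic off of, or into a two-point intersection with, an essential sphere, so the claim that primality of $\Sigma(Y,K)$ forces $Y$ irreducible requires an actual equivariant-sphere argument, not an appeal to one.
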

Another conjecture, due to \os (see for instance \cite{HL:Splicing}), is that the only integer homology sphere L-spaces are the 3-sphere and connected sums of the Poincar\'e sphere. We remark that these conjectures conspire to limit the scope of Theorem \ref{generaltheorem} to the 3-sphere and connected sums of the Poincar\'e sphere.
\end{remark}

\subsection{Proof of Theorem \ref{lspaceknots}}
Next we turn our attention from knots whose branched double covers are L-spaces to knots which admit L-space surgeries. Recall that an L-space knot is a knot in $S^3$ such that a positive $p/q$--Dehn surgery along $K$ yields an L-space $S^3_{p/q}(K)$. By work of Ni, an L-space knot $K$ is fibered \cite{Ni}, thus Kalfagianni's result for fibered knots immediately implies that all L-space knots satisfy the cosmetic crossing conjecture. Thus the following theorem was previously known to be true, but the proof provided here does not appeal to Kalfagianni's work.

\begin{reptheorem}{lspaceknots}
Let $K$ be a knot in $S^3$ with an L-space surgery. Then $K$ satisfies the cosmetic crossing conjecture.
\end{reptheorem}

\begin{proof}
After perhaps mirroring $K$, we can assume that $K$ admits a positive L-space surgery.  Suppose that $K$ admits a crossing change at $c$ that preserves the knot type of $K$. Without loss of generality, assume that $c$ is a positive crossing; the argument in the case $c$ is negative is identical. Recall that if $K$ is an L-space knot, then $p$--framed surgery along $K$ in $S^3$ yields an L-space for all integers $p\geq 2g(K)-1$. Denote by $\widetilde{C}_p$ the image of the crossing circle $C$ after $p$--surgery along $K$.  Note that $\widetilde{C}_p$ is null-homologous.  It follows that
\[
	(S^3_p(K^+))_{-1}(\widetilde{C}_p)\isom S^3_p(K^-)
\]
where $K^-$ is the image of $K^+$ after $-1$--surgery along $C$. By assumption, the pairs $(S^3, K^+)$ and $(S^3, K^{-})$ are orientation preserving homeomorphic. Thus
\[
	(S^3_p(K^+))_{-1}(\widetilde{C}_p) \isom S^3_p(K^-) \isom S^3_p(K^+) \isom (S^3_p(K^+))_{-1}(U),
\]
where $U$ is an unknot in $S^3_p(K^+)$. As another application of Theorem \ref{unknot}, we find that $\widetilde{C}_p$ is itself an unknot for each $p \geq 2g(K) - 1$. In particular, this implies that the manifold with boundary $S^3_p(K)-N(\widetilde{C}_p)$ is reducible for all $p\geq 2g(K)-1$.

It is well known that if $M$ is a compact, orientable, irreducible 3-manifold and $F\subset \partial M$ is a toroidal boundary component, then at most finitely many slopes on $F$ yield reducible Dehn fillings. The exterior of the two-component link $K\cup C$ in $S^3$ has infinitely many fillings along $K$ yielding the reducible manifolds $S^3_p(K)-N(\widetilde{C}_p)$, so it must be the case that the exterior of $(K\cup C)$ is reducible.  It follows that $K \cup C$ is split.  Since $C$ sits in an embedded 3-ball,  the crossing is nugatory.
\end{proof}

%%%%%%%%%%%%%%%%%%%%%%%%%%%%%%%%%%%%%%%%%%%%
%%%%%%%%%%%%%%%%%%%%%%%%%%%%%%%%%%%%%%%%%%%%

\section{Examples}
\label{sec:examples}

\subsection{Knots of at most ten crossings}
\label{smallknots}
In this section we settle the cosmetic crossing conjecture for all but ten of the knots in the Rolfsen tables. Let us summarize the known obstructions to a knot $K$ admitting a cosmetic crossing change. If:
\begin{itemize}
\item $K$ is two-bridge \cite{Torisu}, or
\item $K$ is fibered \cite{Kalfagianni}, or
\item $K$ is genus one and is not algebraically slice or $H_1(\bdc)$ fails to be cyclic \cite{BFKP}, or
%Moreover if $K$ admits a unique minimal Seifert surface, its Alexander polynomial is trivial.
\item $\Sigma(K)$ is an L-space and $H_1(\Sigma(K))$ has only square-free summands [Theorem \ref{main}],
\end{itemize}
then $K$ admits no cosmetic crossing changes. With the exception of the knots $9_{46}$ and $10_{128}$, all of the knots mentioned in Table \ref{nine} and Table \ref{ten} are known to be alternating or quasi-alternating, hence are $\rkh$--thin and therefore have branched double covers that are L-spaces. The quasi-alternating status is the collected effort of many authors and relevant summaries may be found in \cite{CK, Jablan}.

\begin{table}[h]
\[ 
\begin{array}{cccc}
	\text{Knot} & \text{Determinant} & \text{Genus} & H_1(\Sigma(K)) \\
	\hline
	\hline
	8_{15} & 33 & 2 &\\
	9_{16} & 39 & 3 &\\
	9_{25} & 47 & 2 &\\
	9_{35} & 27 & 1 & \Z/3\Z \oplus \Z/9\Z\\
	9_{37} & 45 & 2 & \Z/3\Z \oplus \Z/15\Z  \\
	9_{38} & 57 & 2 &\\
	9_{39} & 55 & 2 &\\
	9_{41} & 49 & 2 & \Z/7\Z \oplus \Z/7\Z \\
	9_{46} & 9 & 1 & \Z/3\Z \oplus \Z/3\Z \\
	9_{49}  & 25 & 2 & \Z/5\Z \oplus \Z/5\Z  \\
	\hline
	\hline
\end{array}
\]

\caption{Knots of nine or fewer crossings that are non-fibered and have bridge number at least three.  When the determinant is not square-free, we provide the first homology of the branched double cover.}
\label{nine}
\end{table}
\begin{proposition}
\label{ninecrossing}
All knots of at most nine crossings satisfy the cosmetic crossing conjecture.
\end{proposition}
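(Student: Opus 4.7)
The plan is to partition the knots of at most nine crossings into four classes and dispatch each class with one of the four obstructions listed above. First, any such knot is either two-bridge (handled by Torisu), fibered (handled by Kalfagianni), or appears in Table~\ref{nine}; this reduction is what the table is curated for. So it suffices to handle each of the ten knots in Table~\ref{nine} individually.

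Next, split Table~\ref{nine} according to whether the knot is $\rkh$--thin and has branched double cover an L-space. By the discussion preceding Corollary~\ref{squarefreedet}, every knot in the table except $9_{46}$ is quasi-alternating (this is the standard quasi-alternating census, cf.\ \cite{CK, Jablan}), hence $\rkh$--thin, hence has L-space branched double cover. For the eight non-genus-one entries in the table, namely $8_{15}$, $9_{16}$, $9_{25}$, $9_{37}$, $9_{38}$, $9_{39}$, $9_{41}$, $9_{49}$, I would verify directly that the invariant factor decomposition of $H_1(\Sigma(K))$ has each summand of square-free order: primes $47$ and the products $3\cdot 11$, $3\cdot 13$, $3\cdot 19$, $5\cdot 11$ for the cyclic cases, and the non-cyclic groups $\Z/3 \oplus \Z/15$, $\Z/7 \oplus \Z/7$, $\Z/5 \oplus \Z/5$ listed in the table. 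In every case each summand is square-free, so Theorem~\ref{main} applies and the cosmetic crossing conjecture holds for these eight knots.

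The remaining two knots in the table are $9_{35}$ and $9_{46}$, both of genus one. For $9_{35}$ we have $H_1(\Sigma(K)) \cong \Z/3 \oplus \Z/9$, which is not cyclic, so the genus-one obstruction from \cite{BFKP} (fourth bullet in the summary above) applies. For $9_{46}$ the branched double cover is not obviously an L-space, so Theorem~\ref{main} cannot be invoked directly; however $H_1(\Sigma(9_{46})) \cong \Z/3 \oplus \Z/3$ is again not cyclic, and again \cite{BFKP} rules out cosmetic crossing changes.

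The main obstacle, and the only nontrivial verification, is confirming the first homology groups $H_1(\Sigma(K))$ in the non-cyclic cases; this is a routine computation from the Goeritz form of each diagram, and the values agree with those recorded in Table~\ref{nine}. Once these are in hand, the four bulleted obstructions cover every knot of at most nine crossings, completing the proof.
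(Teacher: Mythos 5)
Your proposal is correct, and for nine of the ten knots in Table~\ref{nine} it coincides with the paper's argument: reduce to the non-fibered, bridge-number-at-least-three knots, apply Theorem~\ref{main} to the eight quasi-alternating entries (whose homology summands you correctly verify to be square-free), and apply the genus-one obstruction of \cite{BFKP} to $9_{35}$. The one place you genuinely diverge is $9_{46}$. The paper keeps $9_{46}$ in the Theorem~\ref{main} column: although it is not quasi-alternating, Shumakovitch showed it is reduced Khovanov thin, so $\Sigma(9_{46})$ is still an L-space, and since $H_1(\Sigma(9_{46}))\isom \Z/3\Z\oplus\Z/3\Z$ has square-free summands the main theorem applies. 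You instead observe that $9_{46}$ has genus one with non-cyclic $H_1(\Sigma(K))$ and invoke \cite{BFKP}, exactly as for $9_{35}$. Both routes are valid; yours is arguably more economical here, since it needs only the homology of the branched double cover and sidesteps the Khovanov-thinness input entirely, whereas the paper's route illustrates that Theorem~\ref{main} reaches beyond the quasi-alternating class. Your closing remark that the homology computations are the ``only nontrivial verification'' matches the paper, which cites the KnotInfo cyclic branched cover calculator for the same data.
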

\begin{proof}
Utilizing the KnotInfo database \cite{KnotInfo}, in Table \ref{nine} we collect the knots of nine or fewer crossings that are non-fibered and have bridge number at least three. These are listed with their determinants and genera. Where indicated, the homology groups of the branched double covers were computed using the ``Cyclic Branched Cover Homology Calculator" program, available at KnotInfo.  

The knots $8_{15}, 9_{16}, 9_{25}, 9_{37}, 9_{38}, 9_{39}, 9_{41}$, and $9_{49}$ are quasi-alternating, hence $\rkh$--thin.  By the computations of the first homologies of the branched double covers in Table \ref{nine}, Theorem \ref{main} implies they admit no cosmetic crossing changes. The knot $9_{46}$ is known to be reduced Khovanov thin, but not \emph{odd} Khovanov homology thin, due to Shumakovitch \cite{Shumakovitch}. Thus it is not quasi-alternating, but its branched double cover is still an L-space, and Theorem \ref{main} again applies. Theorem \ref{main} does not apply to the only remaining knot, $9_{35}$, because $H_1(\Sigma(9_{35}))$ has a $\Z/9\Z$ summand. Since this knot has genus one and $H_1(\bdc)$ is not cyclic, it admits no cosmetic crossing changes by \cite{BFKP}.  
\end{proof}

\begin{proof}[Proof of Theorem \ref{classification}]
Using the same strategy, we collect the relevant data for the ten-crossings knots that are non-fibered and have bridge number at least three. All of the knots in Table \ref{ten} have genus at least two, so the obstruction of \cite{BFKP} will not help in the present case.  Aside from the knot $10_{128}$, each knot in Table \ref{ten} is $\rkh$--thin, so Theorem \ref{main} applies to each knot other than those listed in \eqref{exceptions}.  Finally, we consider the knot $10_{128}$.  This is the Montesinos knot $M(-2; 4/7, 1/2, 2/3)$. Using \cite[Theorem 1.3]{LiscaMatic} and \cite[Theorem 1.1]{LiscaStipsicz}, the branched double cover may be checked to be an L-space, and again Theorem \ref{main} applies.  This analysis of the ten-crossing knots together with Proposition \ref{ninecrossing} completes the proof of Theorem \ref{classification}.
\end{proof}

\begin{remark}
The ten knots in \eqref{exceptions} share the following additional properties: they are hyperbolic, non-fibered, bridge number three, and have genus either two or three. 
\end{remark}

\begin{table}
\centering
\begin{tabular}{l|l}
$
\begin{array}{cccc}
\text{Knot} & \text{Determinant} & H_1(\Sigma(K)) \\
\hline
\hline
10_{49}	& 59	 & \\
10_{50}	& 53	 & \\
10_{51}	& 67	 & \\
10_{52}	& 59	 & \\
10_{53}	& 73	 & \\
10_{54}	& 47	 & \\
10_{55}	& 61	 & \\
10_{56}	& 65	 & \\
10_{57}	& 79	 & \\
10_{58}	& 65	 & \\
10_{61}	& 33	 & \\
10_{63}	& 57	 & \\
\rowcolor{Gray}
10_{65}	& 63 & \Z/63\Z \\
\rowcolor{Gray}
10_{66}	& 75 & \Z/75\Z \\
\rowcolor{Gray}
10_{67}	& 63& \Z/63\Z \\
10_{68}	& 57	 & \\
10_{72}	& 73	 & \\
10_{74}	& 63 & \Z/3\Z \oplus \Z/21\Z \\
10_{76}	& 57	 & \\
\rowcolor{Gray}
10_{77}	& 63 & \Z/63\Z \\
10_{80}	& 71	 & \\
10_{83}	& 83	 & \\
10_{84}	& 87	 & \\
10_{86}	& 85	 & \\
\rowcolor{Gray}
10_{87}	& 81 & \Z/81\Z \\
10_{90}	& 77	 & \\
10_{92}	& 89	 & \\
10_{93}	& 67	 & \\
\hline
\end{array}
$
&
$
\begin{array}{ccc}
\text{Knot} & \text{Determinant} & H_1(\Sigma(K)) \\
\hline
\hline
10_{95}	& 91	 & \\
10_{97}	& 87	 & \\
\rowcolor{Gray}
10_{98}	& 81 & \Z/3\Z \oplus \Z/27\Z \\
10_{101}	& 85	 & \\
10_{102}	& 73	 & \\
10_{103}	& 75 & \Z/5\Z \oplus \Z/15\Z \\
\rowcolor{Gray}
10_{108}	& 63 & \Z/63\Z \\
10_{111}	& 77	 & \\
10_{113}	& 111 & \\
10_{114}	& 93	 & \\
10_{117}	& 103 & \\
10_{119}	& 101 & \\
10_{120}	& 105 & \\
10_{121}	& 115 & \\
10_{122}	& 105 & \\
10_{128} & 11 & \\
\rowcolor{Gray}
10_{129}	& 25	 & \Z/25\Z\\
10_{130}	& 17 & \\
10_{131}	& 31 & \\
10_{134}	& 23 & \\
10_{135}	& 37 & \\
10_{142}	& 15 & \\
10_{144}	& 39 & \\
10_{146}	& 33	 & \\
\rowcolor{Gray}
10_{147}	& 27 & \Z/27\Z \\
10_{162}	& 35	 & \\
\rowcolor{Gray}
10_{164}	& 45 & \Z/45\Z \\
10_{165}	& 39 & \\	
\hline
\end{array}
$
\end{tabular}
\caption{The ten-crossing knots that are non-fibered and have bridge number at least three.  When the determinant is not square-free we provide the first homology of the branched double cover, to determine if Theorem \ref{main} applies.}
\label{ten}
\end{table}

\subsection{Further examples}
Using the obstructions of Theorem \ref{main}, it is not difficult to find new families of knots which satisfy the cosmetic crossing conjecture. 
%By ``large'' we mean they have either unbounded determinant or form infinite families of distinct knots.

\begin{example}[Pretzel knots]
\label{pretzelexample}
Consider the three-stranded pretzel knots $P(-p,q,r)$ where $p>0$ is even and $q,r>0$ are odd. When $q=p-1$, we have that 
\begin{equation}
\label{det}
	\det(P(-p,q, r)) = | -pq-pr+qr| = p^2-p +r,
\end{equation}
thus for any odd integer $n$, the pretzel knots $P(-p, p-1, n+p-p^2)$ are an infinite family of knots of determinant $n$. By Greene \cite[Theorem 1.4(d)]{Greene}, the pretzel knots $P(-p,p-1,r)$ are quasi-alternating, but in particular $r$ must be positive. Combining these statements, we see that for every odd integer $n \geq 3$ we have a finite set of quasi-alternating pretzel knots $P(-p,p-1,r)$ of determinant $n$, where $r=n+p-p^2$. By Corollary \ref{squarefreedet}, we see that for every square-free odd integer $n \geq 3$, there exists even $p > 0$ such that the pretzel knot $P(-p,p- 1,n+p-p^2)$ has determinant $n$ and satisfies the cosmetic crossing conjecture.

Note also that when $p\geq 4$ is even and $q, r \geq3$ are odd, then by \cite{Gabai:Detecting}, $P(-p,q,r)$ is non-fibered, and the genus of $P(-p,q,r)$ is $(q+r)/2$ (see \cite[Corollary 2.7]{KL}). Moreover, these knots are hyperbolic \cite[Theorem 2.4]{KL} and not two-bridge, and so excluding the handful of cases where $0<p,q,r<4$, the previously known obstructions to admitting a cosmetic crossing change do not apply. 
\end{example}

\begin{example}[Branch sets of L-space surgeries]
Let $K$ be a strongly invertible L-space knot.  For $p/q \geq 2g(K) - 1$ with $p$ odd, we have that $S^3_{p/q}(K)$ is an L-space which can be expressed as the branched double cover of a knot $J_{p/q}$.  If $p$ is square-free, then we have by Theorem~\ref{main} that $J_{p/q}$ necessarily satisfies the cosmetic crossing conjecture.  Further, if $K$ is hyperbolic, then by Thurston's hyperbolic Dehn surgery theorem, all but finitely many of the surgeries on $K$ will be hyperbolic. For such surgeries, the corresponding knot $J_{p/q}$ is not arborescent (and in particular is not a pretzel or Montesinos knot).  While $K$ is necessarily fibered, the quotient knot $J_{p/q}$ need not be fibered; for example, every 2-bridge knot is the quotient of a surgery on the unknot.

Finally, we remark that conjecturally every L-space knot is strongly invertible.
\end{example}

In Example \ref{pretzelexample}, we made use of finite sets of quasi-alternating knots of a fixed determinant. In \cite[Conjecture 3.1]{Greene}, Greene conjectured that this phenomenon is always the case, namely that there exist only finitely many quasi-alternating links with a given determinant. We now describe an infinite family of knots with fixed determinant. Though these knots will be $\rkh$--thin, presumably they are not quasi-alternating.

\begin{example}[Symmetric unions]
\label{symmetricexample}
Further examples can be generated using symmetric unions, a classical construction due to Kinoshita and Terasaka \cite{KiTe}. In particular this construction can be used to create infinite families of examples with a fixed determinant, unlike the two examples described above. The symmetric unions $K_n(5_2)$ of the knot $5_2$ (see Figure \ref{symmetricunion} for the knot $K_2(5_2)$) are an example of such a construction.  

In a forthcoming note of the second author \cite{Moore}, the knots $K_{n} = K_n(5_2)$ with $n\equiv 0 \pmod 7$, are shown to be reduced Khovanov thin, non-alternating, non-fibered, hyperbolic, of genus two, bridge number three, and have $H_1(\Sigma(K_n))\isom \Z/7\Z \oplus \Z/7\Z$. Applying Theorem \ref{main}, we see that this infinite family of knots with constant determinant satisfies the cosmetic crossing conjecture.  
\end{example}

\begin{figure}[t]
\centering
	\labellist
	\huge 
	\pinlabel $\rbrace$ at 400 150
	\large
	\pinlabel $n$ at 450 150
	\endlabellist
	\includegraphics[width=6cm]{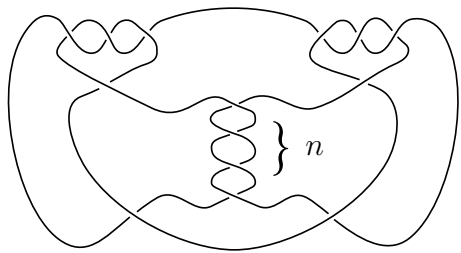}
\caption{A symmetric union of the knot $5_2$ with $n=2$ full twists in the clasp.}
\label{symmetricunion}
\end{figure}

\section*{Acknowledgments}
We would like to thank Cameron Gordon for explaining the proof of Proposition~\ref{nugatory}, Danny Ruberman and Steven Sivek for insightful conversations, and Effie Kalfagianni, Mark Powell, and Liam Watson for helpful correspondence. The first author was partially supported by NSF RTG grant DMS-1148490.  The second author is partially supported by NSF grant DMS-1148609.

\bibliographystyle{alpha}
\bibliography{bibliography}

\end{document}